\documentclass[10pt]{amsart}
\usepackage{hyperref,mathrsfs,enumerate}
\usepackage[english]{babel}
\usepackage[all]{xy}

\newtheorem{theorem}{Theorem}

\newtheorem{lemma}{Lemma}[section]
\newtheorem{proposition}[lemma]{Proposition}

\theoremstyle{definition}
\newtheorem{definition}[lemma]{Definition}
\theoremstyle{remark}

\numberwithin{equation}{section}

\makeatletter
\newcommand{\labitem}[2]{\def\@itemlabel{#1} \item \def\@currentlabel{#1}\label{#2}}
\makeatother

\newcommand{\tr}[1]{\vphantom{#1}^t #1}

\begin{document}

\title{The $S$-adic Pisot conjecture on two letters}

\author[V.~Berth\'e]{Val\'erie Berth\'e}
\author[M.~Minervino]{Milton Minervino}
\author[W.~Steiner]{Wolfgang Steiner}
\address{LIAFA, CNRS UMR 7089, Universit\'e Paris Diderot -- Paris 7, Case 7014, 75205 Paris Cedex 13, FRANCE}
\email{berthe@liafa.univ-paris-diderot.fr, milton@liafa.univ-paris-diderot.fr, steiner@liafa.univ-paris-diderot.fr}

\author[J.~Thuswaldner]{J\"org Thuswaldner}
\address{Chair of Mathematics and Statistics, University of Leoben, A-8700 Leoben, AUSTRIA}
\email{joerg.thuswaldner@unileoben.ac.at}

\thanks{The authors are participants in the ANR/FWF project ``FAN -- Fractals and Numeration'' (ANR-12-IS01-0002, FWF grant I1136)}
\date{\today}

\begin{abstract}
We prove an extension of the well-known Pisot substitution conjecture to the $S$-adic symbolic setting on two letters. The proof relies on the use of Rauzy fractals  and on the fact that strong coincidences hold in this framework.
\end{abstract}

\maketitle

\section{Introduction}
A~symbolic substitution is a morphism of the free monoid. More generally, a substitution rule acts on a finite collection of tiles by first inflating them, and then  subdividing them into translates of tiles of the initial collection.
Substitutions thus generate symbolic dynamical system as well as tiling spaces.
The Pisot substitution conjecture states that any substitutive dynamical system has pure discrete spectrum, under the algebraic assumption that its expansion factor is a Pisot number (together with some extra assumption of irreducibility). 
Pure discrete spectrum means that the substitutive dynamical system is measurably conjugate  to a rotation on a compact abelian group. 
Pisot substitutions are thus expected to produce self-similar systems with long range order.
This conjecture has been proved in the two-letter case; see \cite{Barge-Diamond:02} together with \cite{Host} or~\cite{Hollander-Solomyak:03}.
For more on the Pisot substitution conjecture, see e.g.\ \cite{CANTBST,ABBLS}.

We prove an extension of the two-letter  Pisot substitution conjecture to the symbolic  $S$-adic framework, that is, for infinite words generated by iterating different substitutions in a prescribed order.
More precisely, an $S$-adic expansion of an infinite word~$\omega$ is given by a sequence of substitutions $\boldsymbol{\sigma} = (\sigma_n)_{n \in \mathbb{N}}$ (called directive sequence) and a sequence of letters $(i_n)_{n \in \mathbb{N}}$, such that $\omega = \lim_{n\to\infty} \sigma_0 \sigma_1 \cdots \sigma_n(i_{n+1})$. 
Such  expansions are widely studied. They occur e.g.\ under the term `mixed substitutions' or `multi-substitution' \cite{Gahler:2013,PV:2013}, or else as `fusion systems' \cite{PriebeFrank-Sadun11,PriebeFrank-Sadun14}.
They are closely connected to adic systems, such as considered e.g.\ in~\cite{Fisher:09}. Fore more on $S$-adic systems, see  \cite{Durand:00a,Durand:00b,Durand-Leroy-Richomme:13,Berthe-Delecroix}.

We consider the (unimodular) Pisot $S$-adic framework introduced in~\cite{Berthe-Steiner-Thuswaldner}, where unimodular means that the incidence matrices of the substitutions are unimodular. 
The $S$-adic Pisot condition is stated in terms of Lyapunov exponents: the second Lyapunov exponent associated with the shift space made of the directive sequences
$\boldsymbol{\sigma}$ and with the cocyles provided by the incidence matrices of the substitutions is negative. 
For a given directive sequence~$\boldsymbol{\sigma}$, let $\mathcal{L}_{\boldsymbol{\sigma}}^{(k)}$ stand for the language associated with the shifted directive sequence $(\sigma_{n+k})_{n \in \mathbb{N}}$. 
The Pisot condition implies in particular a uniform balancedness property for $\mathcal{L}_{\boldsymbol{\sigma}}^{(k)}$ (i.e., uniformly bounded symbolic discrepancy),  uniform with respect to some  infinite set of non-negative integers~$k$.  Note that the balancedness property   is proved in \cite{Sadun:15}     to characterize  topological  conjugacy  under changes in the lengths of the tiles  of the associated ${\mathbb R}$-action on  the  $1$-dimensional tilings.  

One difficulty when working in the $S$-adic framework is that no  natural candidate   exists for a left eigenvector (that is, for a stable space). 
Recall that the normalized left eigenvector (whose existence comes from the Perron-Frobenius Theorem) in the substitutive case provides in particular the measure of the tiles of the associated tiling of the line.
We introduce here a set of assumptions which, among other things, allows us to work with a generalized left eigenvector (that is, a stable space). 
We stress the fact that this vector is not canonically defined (contrarily to the right eigenvector). 
These assumptions will be implied by the $S$-adic Pisot condition.
We need first to guarantee the existence of a generalized right eigenvector~$\mathbf{u}$ (which yields the unstable space). 
The corresponding  conditions are natural and are stated in terms of primitivity of the directive  sequence~$\boldsymbol{\sigma}$ in the $S$-adic framework. 
We also require the directive sequences~$\boldsymbol{\sigma}$ to be recurrent (every finite combination of substitutions in the sequence occurs infinitely often), which yields unique ergodicity and strong convergence toward the generalized right eigendirection.
We then need rational independence of the coordinates of~$\mathbf{u}$. 
This is implied by the assumption of algebraically irreducibility, which states that the characteristic polynomial of  the incidence matrix of $\sigma_k \sigma_{k+1} \cdots \sigma_{\ell}$ is irreducible for all~$k$ and all sufficiently large~$\ell$.
Lastly, we then will be able to define a vector playing the role of a left eigenvector, by compactness together with primitivity and recurrence.

The starting point of the proofs in the two-letter substitutive case is that strong coincidences hold~\cite{Barge-Diamond:02}.
We also  prove  an analogous statement  as a starting point. However   the fact that strong  coincidences   hold at order $n$ (i.e.,
for the product $\sigma_0 \sigma_1 \cdots \sigma_n$)   does not necessarily imply strong coincidences at order $m$  for $m > n$ (which  holds  in the substitutive case)
makes the   extension to the present  framework  more delicate than it first  occurs. The proof heavily relies, among other things, on the recurrence  of the  directive sequence.
There are then two strategies, one based on  making explicit  the action of the rotation on the unit circle~\cite{Host}, whereas the approach of \cite{Hollander-Solomyak:03} uses the balanced pair algorithm. 
However, there seems to be no natural expression of the balanced pair algorithm in the $S$-adic framework.
Our strategy for proving discrete spectrum thus relies on the use of `Rauzy fractals' and follows~\cite{Host}. 
We recall that a Rauzy fractal is a set which is endowed with an exchange of pieces acting on it  that allows  to make explicit (by factorizing by a natural lattice) the maximal equicontinuous factor of the underlying symbolic dynamical system (see~\cite[Section~7.5.4]{Fog02}). For more on Rauzy fractals, see e.g. \cite{Fog02,ST09,CANTBST}.
More precisely, the strong coincidence condition implies that there is a well-defined exchange of pieces on the Rauzy fractal.
It remains to factorize this exchange of pieces in order to get a circle rotation. 
The factorization comes from \cite{Host} and extends directly from the substitutive case to the present $S$-adic framework.

\smallskip
Let us now state our results more precisely. 
(Although most of the terminology of the statements was defined briefly in the introduction above, we refer the reader to Section~\ref{sec:ingredients} for exact definitions.)
We just recall here that a sequence of substitutions $\boldsymbol{\sigma} = (\sigma_n)_{n\in\mathbb{N}}$ on the alphabet $\mathcal{A} = \{1,2\}$ satisfies the \emph{strong coincidence condition} if there is $n \in \mathbb{N}$ such that $\sigma_0 \sigma_1 \cdots \sigma_n(1)\in p_1 i \mathcal{A}^*$ and $\sigma_0 \sigma_1 \cdots \sigma_n(2)\in p_2 i \mathcal{A}^*$ for some $i\in\mathcal{A}$ and words $p_1,p_2\in\mathcal{A}^*$ with the same abelianization $\mathbf{l}(p_1) = \mathbf{l}(p_2)$.

\begin{theorem} \label{scc}
Let $\boldsymbol{\sigma} = (\sigma_n)_{n\in\mathbb{N}}$ be a primitive and algebraically irreducible sequence of substitutions over $\mathcal{A} = \{1,2\}$. Assume that there is $C > 0$ such that for each $\ell \in \mathbb{N}$, there is $n \ge 1$ with $(\sigma_{n},\ldots,\sigma_{n+\ell-1}) = (\sigma_{0},\ldots,\sigma_{\ell-1})$ and the language $\mathcal{L}_{\boldsymbol{\sigma}}^{(n+\ell)}$ is $C$-balanced. 
Then $\boldsymbol{\sigma}$ satisfies the strong coincidence condition.
\end{theorem}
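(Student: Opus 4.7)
My plan is to argue by contradiction. Suppose the strong coincidence condition fails at every level $n \ge 0$; I will derive a contradiction from the three hypotheses.

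\emph{A one-dimensional walk.} Write $W_n^{(i)} := \sigma_0 \sigma_1 \cdots \sigma_n(i)$ for $i \in \{1, 2\}$, and put
\[
g_n(k) := \bigl|W_n^{(1)}[1..k]\bigr|_1 - \bigl|W_n^{(2)}[1..k]\bigr|_1
\]
for $0 \le k \le \min\bigl(|W_n^{(1)}|, |W_n^{(2)}|\bigr)$. In the two-letter case, two prefixes have the same abelianization iff they have the same length and the same number of $1$'s, so a strong coincidence at level $n$ is equivalent to the existence of some $k \ge 1$ such that $g_n(k-1) = 0$ and $W_n^{(1)}[k] = W_n^{(2)}[k]$. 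The failure of strong coincidence at every level thus translates into: for every $n$, the $\{0, \pm 1\}$-valued walk $g_n$ (starting at $0$) leaves the value $0$ by a step of $\pm 1$ at every visit.

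\emph{Control and self-similarity.} Primitivity plus recurrence provides a generalized right eigenvector $\mathbf{u} = (u_1, u_2) \in \mathbb{R}_{>0}^2$; algebraic irreducibility forces $u_1/u_2 \notin \mathbb{Q}$. Combined with the $C$-balancedness of the tail language $\mathcal{L}_{\boldsymbol{\sigma}}^{(n+\ell)}$ --- whose factors are pulled back through the common left factor $\sigma_0 \cdots \sigma_{\ell-1}$ --- this yields a uniform bound $|g_n(k)| \le C'$, so that each $g_n$ lives in a fixed bounded interval. Invoking recurrence, pick $\ell_k \to \infty$ and $n_k \ge 1$ satisfying $(\sigma_{n_k}, \ldots, \sigma_{n_k+\ell_k-1}) = (\sigma_0, \ldots, \sigma_{\ell_k-1})$ with $\mathcal{L}_{\boldsymbol{\sigma}}^{(n_k+\ell_k)}$ $C$-balanced; the identity $W_{n_k+\ell_k-1}^{(i)} = \sigma_0 \cdots \sigma_{n_k-1}\bigl(W_{\ell_k-1}^{(i)}\bigr)$ then exhibits self-similarity: the pair of words at level $n_k+\ell_k-1$ arises from a common substitution applied to the pair at level $\ell_k - 1$.

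\emph{The contradiction.} Pass to a subsequential limit --- compactness coming from the bounded strip and the finite step set --- to extract a bi-infinite configuration of walks in the same bounded interval that retains both the no-coincidence-at-$0$ property and the self-similarity produced by the recurrence. Combining unique ergodicity (from primitivity plus recurrence) with the rational independence of $u_1, u_2$ forces this invariant limit configuration to return to value $0$ and take a zero step there, contradicting the no-coincidence assumption. The main obstacle, highlighted in the introduction, is that strong coincidence at some level $n$ does \emph{not} automatically propagate to higher levels, which renders the Barge--Diamond / balanced pair algorithm inapplicable; the crux of the argument is the renormalization step above, which must couple the recurrence of $\boldsymbol{\sigma}$ carefully with the uniform balancedness of the tail languages $\mathcal{L}_{\boldsymbol{\sigma}}^{(n+\ell)}$.
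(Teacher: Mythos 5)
Your setup is sound as far as it goes: the reformulation of strong coincidence as ``the $\{0,\pm1\}$-walk $g_n$ never takes a zero step while sitting at $0$'', the boundedness of $g_n$ via balancedness, and the self-similarity identity $W_{n_k+\ell_k-1}^{(i)}=\sigma_{[0,n_k)}\bigl(W_{\ell_k-1}^{(i)}\bigr)$ are all correct and are, in disguise, the ``broken line / configuration'' picture the paper works with. But the final paragraph is not a proof; it restates the goal. Nothing in ``pass to a subsequential limit \dots unique ergodicity plus irrationality of $u_1/u_2$ forces the limit configuration to return to $0$ and take a zero step there'' explains \emph{why} a zero step must occur: a bounded $\{0,\pm1\}$-walk can perfectly well visit $0$ infinitely often and always leave by $\pm1$, so the contradiction must come from the substitutive structure, and that is exactly the part you have not supplied. (Unique ergodicity, which you invoke, plays no role in the paper's coincidence argument.)

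Concretely, three ingredients of the actual proof are absent. First, the non-propagation of coincidences is not resolved by ``renormalization'' in the abstract: the paper builds the explicit sets $J_h=\{n_{k_0}+\cdots+n_{k_s}: n_{k_j}+\ell_{k_j}\le\ell_{k_{j+1}}\}$ of composite times at which, thanks to~\ref{defR}, one has $\sigma_{[0,n_{k_0}+\cdots+n_{k_s})}=\sigma_{[0,n_{k_s})}\cdots\sigma_{[0,n_{k_0})}$, so that coincidences can be concatenated; ``not $J_h$-coincident'' is the invariant that survives iteration, and your sketch has no substitute for it. Second, the argument cannot be run with the single pair $\{[\mathbf{0},1],[\mathbf{0},2]\}$ (equivalently, your single walk $g_n$): one must consider \emph{all} non-coincident configurations inside $T_{\mathbf{u},C}$, take one of maximal size, slice its $n_k$-th iterate by the lines $\mathbf{v}^\perp+H(\mathbf{p}_j)\mathbf{u}$ orthogonal to a \emph{recurrent left eigenvector} $\mathbf{v}$ (property~\ref{defE}), and use maximality to show that consecutive slices evolve deterministically; your proposal never introduces $\mathbf{v}$, never addresses the case where $\mathbf{v}$ is rational (which the paper handles separately by shifting the sequence), and never leaves the size-two setting. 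Third, the actual contradiction is a pigeonhole: finitely many configurations up to translation force the slices to become periodic under a translation $\mathbf{t}\in\mathbb{Z}^2$ whose arbitrarily large multiples stay in the bounded strip $T_{\mathbf{u},C}$, so $\mathbf{t}$ would be an integer multiple of $\mathbf{u}$, contradicting Lemma~\ref{le:rational}. Irrationality of $\mathbf{u}$ does enter at the end, but only after this periodicity has been established --- and establishing it is the whole content of the proof.
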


Note that we do not assume unimodularity of the substitutions for strong coincidences, whereas we do for the following. 

\begin{theorem}\label{sadic2}
Let $\boldsymbol{\sigma} = (\sigma_n)_{n\in\mathbb{N}}$ be a primitive and algebraically irreducible  sequence of unimodular substitutions over~$\mathcal{A}=\{1,2\}$. Assume that there is $C > 0$ such that for each $\ell \in \mathbb{N}$, there is $n \ge 1$ with $(\sigma_{n},\ldots,\sigma_{n+\ell-1}) = (\sigma_{0},\ldots,\sigma_{\ell-1})$ and the language $\mathcal{L}_{\boldsymbol{\sigma}}^{(n+\ell)}$ is $C$-balanced. 
Then the $S$-adic shift $(X_{\boldsymbol{\sigma}},\Sigma,\mu)$, where $\Sigma$ stands for the shift and $\mu$ is the (unique) shift-invariant measure on $X$,  is measurably conjugate to a rotation on the circle~$\mathbb{S}^1$; in particular, it has pure discrete spectrum.
\end{theorem}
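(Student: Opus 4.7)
The plan is to deduce Theorem~\ref{sadic2} from Theorem~\ref{scc} by constructing a Rauzy fractal associated with $\boldsymbol{\sigma}$ and using the strong coincidence condition to realize the resulting exchange of pieces as a circle rotation, following the strategy of \cite{Host} adapted to the $S$-adic framework. First, Theorem~\ref{scc} applies directly under the hypotheses and furnishes the strong coincidence condition. The assumptions (primitivity, recurrence, algebraic irreducibility, uniform balancedness on an unbounded set of indices) guarantee, as indicated in the introduction, that the generalized right eigenvector $\mathbf{u}$ exists with rationally independent coordinates and that a generalized left eigenvector, playing the role of a stable direction $\mathbf{v}$, is produced by compactness. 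Let $\pi$ denote the projection of $\mathbb{Z}^2$ onto the one-dimensional stable line along $\mathbb{R}\mathbf{u}$.

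I would then define the Rauzy fractal $\mathcal{R}$ as the closure of the set $\{\pi\,\mathbf{l}(p) : p \text{ is a prefix of some element of } X_{\boldsymbol{\sigma}}\}$, together with the two subtiles $\mathcal{R}(i) = \overline{\{\pi\,\mathbf{l}(p) : p\,i\cdots \in X_{\boldsymbol{\sigma}}\}}$ for $i \in \{1,2\}$. The uniform $C$-balancedness on the unbounded set of indices $n+\ell$ provided by the hypothesis is exactly what is needed to show that $\mathcal{R}$ is bounded, and that the subtiles have positive Lebesgue measure. Recurrence of $\boldsymbol{\sigma}$ then promotes the estimates available at the good indices to the full sequence. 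The strong coincidence condition from Theorem~\ref{scc} is then used to prove that $\mathcal{R}(1)$ and $\mathcal{R}(2)$ overlap only on a Lebesgue negligible set: two prefixes that project close to the same point in $\mathcal{R}$ can, via strong coincidence applied at some level~$n$, be shown to have a common continuation, forcing $i = j$. This yields a well-defined exchange of pieces $E : \mathcal{R} \to \mathcal{R}$ that sends $x \in \mathcal{R}(i)$ to $x + \pi\,\mathbf{e}_i$, and a well-defined factor map from $(X_{\boldsymbol{\sigma}}, \Sigma)$ onto $(\mathcal{R}, E)$.

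Next, I would factor $(\mathcal{R},E)$ through the lattice $\Lambda = \pi(\mathbb{Z}\,(\mathbf{e}_1-\mathbf{e}_2))$ in the one-dimensional stable line. Since $\mathbf{u}$ has rationally independent coordinates (by algebraic irreducibility together with unimodularity), $\Lambda$ is a dense subgroup of $\mathbb{R}$ isomorphic to $\mathbb{Z}$, and on $\mathcal{R}/\Lambda \cong \mathbb{S}^1$ the exchange $E$ becomes the translation by the common value of $\pi\,\mathbf{e}_1 \equiv \pi\,\mathbf{e}_2 \pmod{\Lambda}$, i.e., a rotation $R_\alpha$ on the circle. This factorization is the direct analogue of Host's construction and, as noted in the introduction, extends verbatim from the substitutive case.

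The main work is to upgrade this factor to a measurable isomorphism. The point is to show that for Lebesgue-almost every $y \in \mathbb{S}^1$, the fibre of the map $X_{\boldsymbol{\sigma}} \to \mathbb{S}^1$ above $y$ is a single point. Equivalently, the multiplicity function must equal $1$ almost everywhere. Here is where the $S$-adic setting is genuinely harder than the substitutive one: in the substitutive case one exploits self-similarity of the Rauzy fractal and the substitution to propagate a coincidence at one level to all levels. In the $S$-adic case one has no self-similarity, and strong coincidence at level~$n$ need not pass to levels $m>n$. I expect this to be the main obstacle, and I would overcome it by using the recurrence hypothesis to produce, at arbitrarily high levels, products $\sigma_n\cdots\sigma_{n+\ell-1}$ that reproduce $\sigma_0\cdots\sigma_{\ell-1}$; combined with uniform balancedness at those levels, this yields the required geometric estimates (uniform control of the diameters of cylinders in $\mathcal{R}$) to conclude that preimages of generic circle points are singletons. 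Pure discrete spectrum then follows from the Halmos--von Neumann characterization applied to the resulting rotation.
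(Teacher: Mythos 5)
Your overall architecture agrees with the paper's: invoke Theorem~\ref{scc} to get strong coincidence, pass via Proposition~\ref{sadicres}~\eqref{1to1} to the exchange of pieces $(\mathcal{R},E,\lambda)$, quotient by the lattice $\Lambda=\mathbb{Z}(\mathbf{e}_2-\mathbf{e}_1)$ to turn $E$ into the rotation $\mathbf{x}\mapsto\mathbf{x}+\pi_{\mathbf{u},\mathbf{1}}\mathbf{e}_1$ on $\mathbf{1}^\perp/\Lambda\cong\mathbb{S}^1$, and then show that the composite $\overline{\varphi}=\pi\circ\varphi$ is still injective $\mu$-a.e. The problem is that this last step --- the only part of the proof not already supplied by citations --- is exactly where your argument has a gap. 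You propose to get a.e.\ singleton fibres from ``uniform control of the diameters of cylinders in $\mathcal{R}$,'' but that does not address the actual obstruction: $\mathcal{R}\subset[-C,C]^2\cap\mathbf{1}^\perp$ typically contains several points of a single $\Lambda$-coset (about $2C$ of them), so the fibre of $\pi$ over a generic circle point meets $\mathcal{R}$ in several points, and you must rule out that two distinct shift orbits are sent by $\varphi$ to two \emph{distinct} points of the same coset. Shrinking cylinders inside $\mathcal{R}$ says nothing about this, since the ambiguity lives between different $\Lambda$-translates, not within a small neighbourhood.

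The paper closes this gap with Lemma~\ref{host} (Host's argument): given $\overline{\varphi}(u)=\overline{\varphi}(v)$, set $\mathbf{z}_n=\varphi(\Sigma^n u)-\varphi(\Sigma^n v)\in\Lambda$ and note that $z_{n+1}-z_n\in\{0,\pm1\}$ because $E$ translates by $\pi_{\mathbf{u},\mathbf{1}}\mathbf{e}_1$ or $\pi_{\mathbf{u},\mathbf{1}}\mathbf{e}_2$, which agree modulo $\Lambda$. Minimality of $(X_{\boldsymbol{\sigma}},\Sigma)$ gives relatively dense sets of times at which $\pi_0(\varphi(\Sigma^n u))>C-1$ (forcing $z_n\ge 0$) and at which it is $<-C+1$ (forcing $z_n\le 0$); a discrete intermediate-value argument then produces an $n$ with $z_n=0$, i.e.\ $\varphi(\Sigma^n u)=\varphi(\Sigma^n v)$, and one concludes from the a.e.\ injectivity of $\varphi$ itself. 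This is the idea missing from your write-up. A secondary point: you locate ``the main difficulty of the $S$-adic setting'' (non-propagation of coincidences from level $n$ to level $m>n$) in this factorization step, but that difficulty belongs entirely to the proof of Theorem~\ref{scc}, which you are citing as a black box; the factorization step itself carries over from the substitutive case essentially unchanged.
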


Let $S$ be a finite set of substitutions on $\mathcal{A}=\{1,2\}$ having invertible incidence matrices, and let $(D, \Sigma, \nu)$ with $D \subset S^\mathbb{N}$ be an (ergodic) shift equipped with a probability measure~$\nu$. 
With each $\boldsymbol{\sigma} = (\sigma_n)_{n\in\mathbb{N}} \in D$, associate the cocycle $A(\boldsymbol{\sigma}) = \tr{\!M}_0$ and denote the \emph{Lyapunov exponents} w.r.t.\ this cocycle by $\theta_1, \theta_2$. As in the more general situation of \cite[\S 6.3]{Berthe-Delecroix}, we say that $(D, \Sigma, \nu)$ satisfies the \emph{Pisot condition} if $\theta_1 > 0 > \theta_2$; see also \cite[Section 2.6]{Berthe-Steiner-Thuswaldner} for details.
With this notation, we are able to state the following theorem.

\begin{theorem}\label{th:ae}
Let $S$ be a finite set of unimodular substitutions on two letters, and let $(D, \Sigma, \nu)$ with $D \subset S^\mathbb{N}$ be a sofic shift that satisfies the Pisot condition. Assume that $\nu$ assigns positive measure to each (non-empty) cylinder, and that  there exists a cylinder corresponding to a substitution with positive incidence matrix.
Then, for $\nu$-almost all sequences $\boldsymbol{\sigma} \in D$ the $S$-adic shift $(X_{\boldsymbol{\sigma}},\Sigma,\mu)$ is measurably conjugate to a rotation on the circle~$\mathbb{S}^1$; in particular, it has pure discrete spectrum.
\end{theorem}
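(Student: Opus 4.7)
The plan is to reduce Theorem~\ref{th:ae} to Theorem~\ref{sadic2} by verifying, for $\nu$-almost every $\boldsymbol{\sigma} \in D$, the three hypotheses of the latter: primitivity, algebraic irreducibility, and the existence of a uniform constant~$C$ such that for every $\ell \in \mathbb{N}$ one can find an $n \ge 1$ with $(\sigma_n,\ldots,\sigma_{n+\ell-1}) = (\sigma_0,\ldots,\sigma_{\ell-1})$ and $\mathcal{L}^{(n+\ell)}_{\boldsymbol{\sigma}}$ being $C$-balanced.

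Primitivity and algebraic irreducibility are tackled first. The assumption provides a substitution $\tau^\star \in S$ with positive incidence matrix and $\nu([\tau^\star]) > 0$; by ergodicity of $(D,\Sigma,\nu)$ and the Birkhoff ergodic theorem, $\tau^\star$ occurs at positions of positive density in $\nu$-a.e.\ $\boldsymbol{\sigma}$. Any product of non-negative integer matrices (with no zero rows or columns) encompassing an occurrence of $M_{\tau^\star}$ is strictly positive, which yields primitivity of $\boldsymbol{\sigma}$. For algebraic irreducibility, the characteristic polynomial of any $2\times 2$ unimodular integer matrix has the form $X^2 - tX \pm 1$ and is reducible over $\mathbb{Q}$ only when its two roots are $\pm 1$. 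Under the Pisot condition, Oseledets' theorem shows that for $\nu$-a.e.\ $\boldsymbol{\sigma}$ the top singular value of $M_{\sigma_k}\cdots M_{\sigma_\ell}$ grows like $e^{\theta_1(\ell-k)}$; the dominant eigenvalue therefore exceeds $1$ for all $\ell - k$ large, ruling out reducibility for every fixed $k$ and all sufficiently large $\ell$.

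For the balancedness condition we invoke the Pisot balancedness results of \cite{Berthe-Steiner-Thuswaldner} (building on \cite{Berthe-Delecroix}): they provide a measurable function $C\colon D \to [1,\infty]$ with $\nu(C < \infty) = 1$ such that $\mathcal{L}_{\boldsymbol{\tau}}$ is $C(\boldsymbol{\tau})$-balanced for $\nu$-a.e.\ $\boldsymbol{\tau}$. We fix $C_0$ so that $B := \{\boldsymbol{\tau} : C(\boldsymbol{\tau}) \le C_0\}$ satisfies $\nu(B) > 0$. For $\nu$-a.e.\ $\boldsymbol{\sigma}$ and every $\ell$, writing $w_\ell = (\sigma_0,\ldots,\sigma_{\ell-1})$, Birkhoff's theorem applied to $\boldsymbol{\tau} \mapsto \mathbb{1}_{[w_\ell]}(\boldsymbol{\tau})\,\mathbb{1}_{B}(\Sigma^\ell \boldsymbol{\tau})$ shows that the asymptotic density of $n \ge 1$ simultaneously satisfying $\Sigma^n \boldsymbol{\sigma} \in [w_\ell]$ and $\Sigma^{n+\ell} \boldsymbol{\sigma} \in B$ equals $\nu([w_\ell] \cap \Sigma^{-\ell} B)$. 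The balancedness hypothesis of Theorem~\ref{sadic2} will therefore hold with the uniform constant $C_0$ as soon as this measure is positive for every $\ell$.

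The main obstacle is precisely this positivity. Since $B$ is not a tail event---the balancedness of $\mathcal{L}_{\boldsymbol{\tau}}$ depends on the initial substitutions as well---it is not automatic that a typical initial cylinder $[w_\ell]$ charges $\Sigma^{-\ell} B$. The plan is to handle this by combining a L\'evy-type martingale argument for the conditional probabilities $\nu(\Sigma^{-\ell} B \mid \mathcal{F}_\ell)$ with the Birkhoff-generic visits of the orbit of $\boldsymbol{\sigma}$ to the positive-measure sets $B$ and $[w_\ell]$: the orbit returns to $[w_\ell]$ with density $\nu([w_\ell]) > 0$ and to $B$ with density $\nu(B) > 0$, and a diagonal selection along these two positive-density subsequences produces, for each $\ell$, a prefix-return time $n$ whose $\ell$-shift lies in $B$. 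Once this positivity is secured, Theorem~\ref{sadic2} delivers the desired measurable conjugacy to a rotation on $\mathbb{S}^1$ for $\nu$-a.e.\ $\boldsymbol{\sigma}$.
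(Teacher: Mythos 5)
The paper's own proof of Theorem~\ref{th:ae} is a one-line reduction: granting Theorem~\ref{sadic2}, the statement is exactly \cite[Theorem~2]{Berthe-Steiner-Thuswaldner}, i.e.\ the verification that $\nu$-almost every $\boldsymbol{\sigma}\in D$ satisfies the hypotheses of Theorem~\ref{sadic2} is outsourced to that reference. Your proposal attempts to redo this verification, and while the overall reduction is the right strategy, two steps do not hold up. First, in the algebraic irreducibility argument, the inference from ``the top singular value of $M_{[k,\ell)}$ grows like $e^{\theta_1(\ell-k)}$'' to ``the dominant eigenvalue exceeds $1$'' is invalid: a non-negative unimodular $2\times 2$ integer matrix with reducible characteristic polynomial (eigenvalues $\pm1$) can have arbitrarily large norm, e.g.\ $\left(\begin{smallmatrix}1 & n\\ 0 & 1\end{smallmatrix}\right)$, so norm growth does not control the spectral radius. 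The step is repairable without Oseledets at all: by primitivity $M_{[k,\ell)}$ is eventually positive, and a positive integer matrix with $|\det|=\nobreak 1$ has Perron eigenvalue at least $2$, whereas a reducible monic integer quadratic with constant term $\pm1$ forces both eigenvalues to be $\pm 1$. But as written the ``therefore'' is a gap.

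Second, and more seriously, the balancedness step --- which you yourself flag as ``the main obstacle'' --- is not actually proved. You need $\nu\big([w_\ell]\cap\Sigma^{-\ell}B\big)>0$ for every $\ell$, where $w_\ell$ is the length-$\ell$ prefix of the directive sequence, and what is offered is only a ``plan''. The observation that the orbit visits $[w_\ell]$ with density $\nu([w_\ell])$ and visits $\Sigma^{-\ell}B$ with density $\nu(B)$ does not yield a common visit: two sets of positive density need not intersect unless their densities sum to more than $1$, and $\nu([w_\ell])\to 0$ as $\ell\to\infty$. The ``L\'evy-type martingale argument'' is not carried out, and it is unclear what the martingale would be, since $\Sigma^{-\ell}B$ is not a tail event and the quantities $\nu(\Sigma^{-\ell}B\mid\mathcal{F}_\ell)$ for varying $\ell$ do not form a single martingale. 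This conjunction of prefix-recurrence with uniform balancedness of the tail language is precisely the delicate point that \cite[Theorem~2]{Berthe-Steiner-Thuswaldner} establishes (via the natural extension, Poincar\'e recurrence and Oseledets genericity), and the paper deliberately cites it rather than reproving it. As it stands, your argument establishes primitivity, can be patched to give algebraic irreducibility, but leaves the key uniform-$C$ recurrence hypothesis of Theorem~\ref{sadic2} unproved.
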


Let us describe briefly the organization of the paper. 
We recall the required definitions in Section~\ref{sec:ingredients}.
Section~\ref{sec:strongcoincidence} is devoted to the proof of the fact that the  strong coincidence condition holds (Theorem~\ref{scc}), and we conclude the proof of Theorems~\ref{sadic2} and~\ref{th:ae} in Section~\ref{sec:conjecture}.

\section{Ingredients}\label{sec:ingredients}

The following $S$-adic framework is defined in full detail and for a finite alphabet $\mathcal{A}=\{1,\ldots,d\}$ in \cite{Berthe-Steiner-Thuswaldner}.
We introduce the reader to some of the main notions and results for $d=2$.

\subsection{$S$-adic shifts}
Let $\mathcal{A} = \{1,2\}$ and let $\mathcal{A}^*$ denote the free monoid of finite words over~$\mathcal{A}$, endowed with
the concatenation of words as product operation.
A~\emph{substitution} $\sigma$ over~$\mathcal{A}$ is an endomorphism of~$\mathcal{A}^*$ sending non-empty words to non-empty words.
The \emph{incidence matrix} (or abelianization) of~$\sigma$ is the square matrix $M_\sigma = (|\sigma(j)|_i)_{i,j\in\mathcal{A}} \in \mathbb{N}^{2\times 2}$,
where  the notation $|w|_i$ stands for the number of occurrences of the letter~$i$ in $w \in \mathcal{A}^*$. 
The substitution $\sigma$ is said to be \emph{unimodular} if $|\!\det M_\sigma| = 1$.
The \emph{abelianization map} is defined by $\mathbf{l}:\ \mathcal{A}^* \to\mathbb{N}^2, \ w \mapsto {}^t(|w|_1,|w|_2)$.
Note that $M_\sigma\circ\mathbf{l} = \mathbf{l}\circ\sigma$.

Let $\boldsymbol{\sigma} = (\sigma_n)_{n\in\mathbb{N}}$ be a sequence of substitutions over the alphabet~$\mathcal{A}$.
For ease of notation we set $M_n = M_{\sigma_n}$ for $n \in \mathbb{N}$, and
\[ 
\sigma_{[k,\ell)} = \sigma_k \sigma_{k+1} \cdots \sigma_{\ell-1} \quad \mbox{and} \quad M_{[k,\ell)} = M_k M_{k+1} \cdots M_{\ell-1} \quad (0 \le k \le \ell).
\]
The sequence~$\boldsymbol{\sigma}$ is said to be \emph{primitive} if, for each $k \in \mathbb{N}$, $M_{[k,\ell)}$ is a positive matrix for some $\ell > k$.
We say that $\boldsymbol{\sigma}$ is \emph{algebraically irreducible} if, for each $k \in \mathbb{N}$, the characteristic polynomial of $M_{[k,\ell)}$ is irreducible for all sufficiently large~$\ell$.

Recall that $w\in\mathcal{A}^*$ is called a \emph{factor} of a finite or infinite word $v$ if it occurs at some position in $v$; it is a \emph{prefix} if it occurs at the beginning of $v$.
The \emph{language} associated with the sequence $(\sigma_{m+n})_{n\in\mathbb{N}}$ is 
\[
\mathcal{L}_{\boldsymbol{\sigma}}^{(m)} = \big\{w \in \mathcal{A}^*:\, \mbox{$w$ is a factor of $\sigma_{[m,n)}(i)$ for some $i \in\mathcal{A}$, $n\in\mathbb{N}$}\big\} \qquad (m \in \mathbb{N}).
\]
We say that a~pair of words $u, v \in \mathcal{A}^*$ with the same length is \emph{$C$-balanced} if 
\[
-C \le |u|_j - |v|_j \le C \quad \mbox{for all}\ j \in \mathcal{A}.
\]
A~language~$\mathcal{L}$ is $C$-\emph{balanced} if each pair of words $u, v \in \mathcal{L}$ with the same length is $C$-balanced.
It is said \emph{balanced} if it is $C$-balanced for some $C>0$.

The \emph{shift}~$\Sigma$ maps an infinite word $(\omega_n)_{n\in\mathbb{N}}$ to  $(\omega_{n+1})_{n\in\mathbb{N}}$.
A~dynamical system $(X,\Sigma)$ is a \emph{shift space} if $X$ is a closed shift-invariant set of infinite words over a finite alphabet, equipped with the product topology of the discrete topology. 

Given a sequence~$\boldsymbol{\sigma}$, let $S = \{\sigma_n:n\in\mathbb{N}\}$.
The \emph{$S$-adic shift} or \emph{$S$-adic system} with sequence~$\boldsymbol{\sigma}$ is the shift space $(X_{\boldsymbol{\sigma}}, \Sigma)$, where $X_{\boldsymbol{\sigma}}$ denotes the set of infinite words~$\omega$ such that each factor of~$\omega$ is an element of~$\mathcal{L}_{\boldsymbol{\sigma}}^{(0)}$.
If $\boldsymbol{\sigma}$ is primitive, then $X_{\boldsymbol{\sigma}}$ is the closure of the $\Sigma$-orbit of any limit word $\omega$ of~$\boldsymbol{\sigma}$, where $\omega\in\mathcal{A}^\mathbb{N}$ is a \emph{limit word} of~$\boldsymbol{\sigma}$ if there is a sequence of infinite words $(\omega^{(n)})_{n\in\mathbb{N}}$ with $\omega^{(0)} = \omega$ and $\omega^{(n)} = \sigma_n(\omega^{(n+1)})$ for all $n \in \mathbb{N}$.

Recall that a shift space $(X,\Sigma)$ is \emph{minimal} if every non-empty closed shift-invariant subset equals the whole set; it is called \emph{uniquely ergodic} if there exists a unique shift-invariant probability measure on~$X$.
Let $\mu$ be a shift-invariant measure defined on $(X,\Sigma)$. 
A~measurable eigenfunction of the system $(X,\Sigma,\mu)$ with associated eigenvalue $\alpha \in \mathbb{R}$ is an $L^2(X,\mu)$ function that satisfies $f(\Sigma^n(\omega)) = e^{2\pi i\alpha n} f(\omega)$ for all $n \in \mathbb{N}$ and $\omega \in X$. 
The system $(X,\Sigma, \mu)$ has \emph{pure discrete spectrum} if $L^2(X,\mu)$ is spanned by the measurable eigenfunctions. 
Furthermore, every dynamical system with pure discrete spectrum is  measurably  conjugate to a Kronecker system, i.e., a rotation on a compact abelian group; see~\cite{Walters:82}.

\subsection{Generalized Perron-Frobenius eigenvectors}
For a sequence of  non-nega\-tive matrices $(M_n)_{n\in\mathbb{N}}$, there exists by \cite[pp.~91--95]{Furstenberg:60} a positive vector $\mathbf{u} \in \mathbb{R}_+^2$ such that
\[
\bigcap_{n\in\mathbb{N}} M_{[0,n)}\, \mathbb{R}^2_+ = \mathbb{R}_+ \mathbf{u},
\]
provided there are indices $k_1 < \ell_1 \le k_2 < \ell_2 \le \cdots$ and a positive matrix~$B$ such that $B = M_{[k_1,\ell_1)} = M_{[k_2,\ell_2)} = \cdots$.
Thus, for primitive and recurrent sequences $\boldsymbol{\sigma}$, this vector exists and we call it the \emph{generalized right eigenvector of} $\boldsymbol{\sigma}$.
The following criterion for $\mathbf{u}$ to have rationally independent coordinates is \cite[Lemma~4.2]{Berthe-Steiner-Thuswaldner}.

\begin{lemma}\label{le:rational}
Let $\boldsymbol{\sigma}$ be an algebraically irreducible sequence of substitutions with generalized right eigenvector~$\mathbf{u}$ and balanced language~$\mathcal{L}_{\boldsymbol{\sigma}}$. Then the coordinates of~$\mathbf{u}$ are rationally independent.
\end{lemma}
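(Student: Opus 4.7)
The plan is to argue by contradiction. Suppose the coordinates of~$\mathbf{u}$ are rationally dependent: there is a nonzero $\mathbf{v} \in \mathbb{Z}^2$ with $\tr{\mathbf{v}}\,\mathbf{u}=0$. The strategy is to use balancedness to show that the integer row vectors $\tr{\mathbf{v}}\,M_{[0,n)}$ form a bounded sequence, then to extract a repetition by pigeonhole, and finally to invoke algebraic irreducibility for a contradiction.

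For the first step, write $\mathbf{f} = \mathbf{u}/(u_1+u_2)$ for the normalized generalized right eigenvector. A standard consequence of $C$-balancedness of~$\mathcal{L}_{\boldsymbol{\sigma}}^{(0)}$ is that there is a constant~$C'$ such that every coordinate of $\mathbf{l}(w) - |w|\,\mathbf{f}$ is bounded by~$C'$ in absolute value, uniformly for $w \in \mathcal{L}_{\boldsymbol{\sigma}}^{(0)}$. Indeed, the normalized columns $M_{[0,n)}\mathbf{e}_i / |\sigma_{[0,n)}(i)|$ converge to~$\mathbf{f}$ by the very definition of the generalized right eigenvector, and $C$-balancedness propagates this estimate uniformly to every word of the language via comparison with prefixes of the $\sigma_{[0,n)}(i)$. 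Contracting with $\tr{\mathbf{v}}$ and using $\tr{\mathbf{v}}\,\mathbf{u}=0$, the integers $\tr{\mathbf{v}}\,\mathbf{l}(w)$ are bounded uniformly in~$w$. Specializing to $w = \sigma_{[0,n)}(i)$ for $i\in\{1,2\}$, whose abelianization is the $i$-th column of $M_{[0,n)}$, yields that $\tr{\mathbf{v}}\,M_{[0,n)}$ ranges over a bounded subset of~$\mathbb{Z}^2$.

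By pigeonhole some row vector $\tr{\mathbf{w}}$ with $\mathbf{w}\in\mathbb{Z}^2$ is attained by $\tr{\mathbf{v}}\,M_{[0,n)}$ for infinitely many~$n$. If $\mathbf{w}=0$, then $\mathbf{v}$ lies in the left kernel of $M_{[0,n)}$ for arbitrarily large~$n$; but algebraic irreducibility applied at $k=0$ makes the characteristic polynomial of $M_{[0,n)}$ irreducible over~$\mathbb{Q}$ for all sufficiently large~$n$, so $M_{[0,n)}$ is invertible and $\mathbf{v}=0$, a contradiction. Otherwise $\mathbf{w}\neq 0$; fix~$k$ with $\tr{\mathbf{v}}\,M_{[0,k)}=\tr{\mathbf{w}}$, use algebraic irreducibility applied at~$k$ to find~$\ell_0$ such that the characteristic polynomial of $M_{[k,\ell)}$ is irreducible for all $\ell\ge \ell_0$, and pick $\ell\ge\ell_0$ with $\tr{\mathbf{v}}\,M_{[0,\ell)}=\tr{\mathbf{w}}$ (possible since this value is attained infinitely often). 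Then $\tr{\mathbf{w}}\,M_{[k,\ell)}=\tr{\mathbf{w}}$, so~$1$ is an eigenvalue of $M_{[k,\ell)}$; but a degree-two polynomial irreducible over~$\mathbb{Q}$ has no rational roots, again a contradiction.

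The main technical step is the first, namely the passage from $C$-balancedness of the language to the uniform bound on $\tr{\mathbf{v}}\,M_{[0,n)}$. Once this bound is in hand, the pigeonhole-plus-irreducibility endgame is routine, and the two-letter setting keeps the case analysis especially clean because the rational dependence reduces to the existence of a single nonzero integer vector orthogonal to~$\mathbf{u}$.
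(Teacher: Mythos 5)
Your proof is correct. Note that the paper does not actually prove this lemma: it is imported verbatim as Lemma~4.2 of \cite{Berthe-Steiner-Thuswaldner}, and your argument reconstructs essentially the proof given there --- balancedness forces the integer row vectors $\tr{\mathbf{v}}M_{[0,n)}$ (for $\mathbf{v}$ orthogonal to $\mathbf{u}$) to be bounded, pigeonhole produces $k<\ell$ with $\tr{\mathbf{v}}M_{[0,k)}=\tr{\mathbf{v}}M_{[0,\ell)}$, and then $M_{[k,\ell)}$ has $1$ (or $0$, in your degenerate case) as a rational eigenvalue, contradicting algebraic irreducibility. The only step you gloss over is the one you correctly flag as the technical core, namely that $C$-balancedness yields uniform letter frequencies equal to the normalized generalized right eigenvector with bounded discrepancy; this is a standard fact (it is exactly the content of \cite[Lemma~4.1]{Berthe-Steiner-Thuswaldner} and of the Berth\'e--Tijdeman balance results), so no gap.
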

 
Contrary to the cones $M_{[0,n)}\, \mathbb{R}_+^2$, there is no reason for the cones $\tr{(M_{[0,n)})}\, \mathbb{R}_+^2$ to be nested.
Therefore, the intersection of these cones does not define a generalized left eigenvector of~$\boldsymbol{\sigma}$. 
However, for a suitable choice of~$\mathbf{v}$, we have a subsequence $(n_k)_{k\in\mathbb{N}}$ such that the directions of $\mathbf{v}^{(n_k)} := \tr{(M_{[0,n_k)})}\, \mathbf{v}$ tend to that of~$\mathbf{v}$; in this case, $\mathbf{v}$ is called a \emph{recurrent left eigenvector}.

Under the assumptions of primitivity and recurrence of~$\boldsymbol{\sigma}$, given a strictly increasing sequence of non-negative integers~$(n_k)$, one can show that there is a recurrent left eigenvector $\mathbf{v} \in \mathbb{R}_{\ge0}^2 \setminus \{\mathbf{0}\}$ such that 
\begin{equation}\label{eq:recurrentcandidate}
\lim_{k\in K,\,k\to\infty} \frac{\mathbf{v}^{(n_k)}}{\|\mathbf{v}^{(n_k)}\|} = \lim_{k\in K,\,k\to\infty}\frac{{}^t(M_{[0,n_k)})\mathbf{v}}{\|{}^t(M_{[0,n_k)})\mathbf{v}\|}= \mathbf{v}
\end{equation} 
for some infinite set $K \subset \mathbb{N}$; see \cite[Lemma~5.7]{Berthe-Steiner-Thuswaldner}. Here and in the following, $\|\cdot\|$ denotes the maximum norm~$\|\cdot\|_\infty$. 
Note that the hypotheses of Lemma~\ref{le:rational} do not guarantee that the coordinates of~$\mathbf{v}$ are rationally independent.

We will work in the sequel with sequences~$\boldsymbol{\sigma}$ satisfying a list of conditions gathered in the following Property PRICE (which stands for Primitivity, Recurrence, algebraic Irreducibility, $C$-balancedness, and recurrent left Eigenvector). 
By \cite[Lemma~5.9]{Berthe-Steiner-Thuswaldner}, this property is a consequence of the assumptions of Theorem~\ref{scc}. 

\begin{definition}[Property PRICE]\label{def:star}
We say that a sequence $\boldsymbol{\sigma} = (\sigma_n)$ has Property \emph{PRICE} w.r.t.\ the strictly increasing sequences $(n_k)_{k\in\mathbb{N}}$ and $(\ell_k)_{k\in\mathbb{N}}$ and the vector $\mathbf{v} \in \mathbb{R}_{\ge0}^2 \setminus \{\mathbf{0}\}$ if the following conditions hold.
\begin{itemize}
\labitem{(P)}{defP}
There exists $h \in \mathbb{N}$ and a positive matrix~$B$ such that $M_{[\ell_k-h,\ell_k)} = B$ for all $k \in \mathbb{N}$.
\labitem{(R)}{defR}
We have $(\sigma_{n_k}, \sigma_{n_k+1}, \ldots,\sigma_{n_k+\ell_k-1}) = (\sigma_0, \sigma_1, \ldots,\sigma_{\ell_k-1})$ for all $k\in\mathbb{N}$.
\labitem{(I)}{defI}
The directive sequence~$\boldsymbol{\sigma}$ is algebraically irreducible.
\labitem{(C)}{defC}
There is $C > 0$ such that $\mathcal{L}_{\boldsymbol{\sigma}}^{(n_k+\ell_k)}$ is $C$-balanced for all $k\in\mathbb{N}$.
\labitem{(E)}{defE}
We have $\lim_{k\to\infty} \mathbf{v}^{(n_k)}/\|\mathbf{v}^{(n_k)}\|= \mathbf{v}$.
\end{itemize}
We also simply say that $\boldsymbol{\sigma}$ satisfies Property PRICE if the five conditions hold for some not explicitly specified strictly increasing sequences $(n_k)_{k\in\mathbb{N}}$ and $(\ell_k)_{k\in\mathbb{N}}$ and some $\mathbf{v} \in \mathbb{R}_{\ge0}^2 \setminus \{\mathbf{0}\}$.
\end{definition}

\subsection{Rauzy fractals}
For a vector $\mathbf{w} \in \mathbb{R}^2\setminus\{\mathbf{0}\}$, let 
\[  
\mathbf{w}^\perp = \{ \mathbf{x}\in\mathbb{R}^2:\, \langle \mathbf{x},\mathbf{w}\rangle = 0 \}
\]
be the line  orthogonal to~$\mathbf{w}$ containing the origin, equipped with the Lebesgue measure~$\lambda$. 
In particular, for $\mathbf{1} = \tr{(1,1)}$, $\mathbf{1}^\perp$ is the line  of vectors whose entries sum up to~$0$.
Let $\pi_{\mathbf{u},\mathbf{w}}$ be the projection along the direction~$\mathbf{u}$ onto~$\mathbf{w}^\perp$.

Given a primitive sequence of substitutions~$\boldsymbol{\sigma}$,  the \emph{Rauzy fractal} associated with~$\boldsymbol{\sigma}$ over~$\mathcal{A}$ is: 
\[ 
\mathcal{R} = \overline{\{\pi_{\mathbf{u},\mathbf{1}}\, \mathbf{l}(p):\, p \in \mathcal{A}^*,\ \mbox{$p$ is a prefix of a limit word of $\boldsymbol{\sigma}$}\}}. 
\]
The Rauzy fractal has natural \emph{refinements} defined by
\[ 
\mathcal{R}(w) = \overline{\{ \pi_{\mathbf{u},\mathbf{1}} \, \mathbf{l}(p):  p \in \mathcal{A}^*,\ \mbox{$pw$ is a prefix of a limit word of $\boldsymbol{\sigma}$}  \}} \quad (w\in\mathcal{A}^*).  
\]
If $w \in \mathcal{A}$, then $\mathcal{R}(w)$ is called a \emph{subtile}.
The  set  $\mathcal{R}$ is bounded if and only if $\mathcal{L}_{\boldsymbol{\sigma}}$ is balanced. 
If $\mathcal{L}_{\boldsymbol{\sigma}}$ is $C$-balanced, then $\mathcal{R} \subset [-C,C]^2 \cap \mathbf{1}^\perp$; see \cite[Lemma~4.1]{Berthe-Steiner-Thuswaldner}.
Note that  $\mathcal{R}$ is not necessarily an interval (however, it is an interval if the language $\mathcal{L}_{\boldsymbol{\sigma}}$ is Sturmian \cite{Fog02}.)

\subsection{Dynamical properties of $S$-adic shifts}\label{dynp}
For $\boldsymbol{\sigma}$ primitive, algebraically irreducible, and recurrent sequence of substitutions with balanced language~$\mathcal{L}_{\boldsymbol{\sigma}}$, the \emph{representation map} 
\[ 
\varphi:\, X_{\boldsymbol{\sigma}} \to \mathcal{R},\quad u_0u_1\cdots \mapsto \bigcap_{n\in\mathbb{N}} \mathcal{R}(u_0u_1\cdots u_n) 
\]
is well-defined, continuous and surjective; for more details, see \cite[Lemma~8.3]{Berthe-Steiner-Thuswaldner}.

Suppose that the strong coincidence condition holds. 
Then the \emph{exchange of pieces}
\[ 
E: \mathcal{R}\to\mathcal{R},\quad \mathbf{x}\mapsto\mathbf{x}+\pi_{\mathbf{u},\mathbf{1}}\,\mathbf{e}_i\quad\text{ if } \mathbf{x}\in\mathcal{R}(i),  
\]
is well-defined $\lambda$-almost everywhere on $\mathcal{R}$.

The following results appear in \cite[Theorem~1]{Berthe-Steiner-Thuswaldner}. 
The assumptions on the directive sequence~$\boldsymbol{\sigma}$ are the ones of  Theorem~\ref{sadic2}.

\begin{proposition}\label{sadicres}
Let $\boldsymbol{\sigma} = (\sigma_n)_{n\in\mathbb{N}}$ be a primitive and algebraically irreducible  sequence of unimodular substitutions over~$\mathcal{A}=\{1,2\}$. 
Assume that there is $C > 0$ such that for each $\ell \in \mathbb{N}$, there is $n \ge 1$ with $(\sigma_{n},\ldots,\sigma_{n+\ell-1}) = (\sigma_{0},\ldots,\sigma_{\ell-1})$ and the language $\mathcal{L}_{\boldsymbol{\sigma}}^{(n+\ell)}$ is $C$-balanced. 
Then the following results are true.
\begin{enumerate}
\item\label{min} 
The $S$-adic shift $(X_{\boldsymbol{\sigma}},\Sigma)$ is minimal and uniquely ergodic.  Let $\mu$ stand for its  unique invariant measure. 
\item 
Each subtile~$\mathcal{R}(i)$, $i \in \mathcal{A}$, of the Rauzy fractal~$\mathcal{R}$ is a compact set that is the closure of its interior; its boundary has zero Lebesgue measure~$\lambda$.
\item\label{1to1} 
If $\boldsymbol{\sigma}$ satisfies the strong coincidence condition, then the subtiles $\mathcal{R}(i)$, $i\in\mathcal{A}$, are mutually disjoint in measure, and
the $S$-adic shift $(X_{\boldsymbol{\sigma}},\Sigma,\mu)$ is measurably conjugate to the  exchange  of pieces $(\mathcal{R},E,\lambda)$ via~$\varphi$.
\end{enumerate}
\end{proposition}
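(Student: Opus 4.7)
The plan is to establish the three conclusions in sequence, with Property PRICE, which follows from the hypotheses by \cite[Lemma~5.9]{Berthe-Steiner-Thuswaldner}, providing the central structural tool. Property PRICE yields the generalized right eigenvector $\mathbf{u}$ (unstable direction), a recurrent left eigenvector $\mathbf{v}$ (a surrogate stable direction), and a fixed positive block $B$ reappearing at positions $\ell_k-h$; together these play the role that strict self-similarity plays in the purely substitutive setting.

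For conclusion (1), minimality is the standard consequence of primitivity: every factor $w \in \mathcal{L}_{\boldsymbol{\sigma}}^{(0)}$ appears in some $\sigma_{[0,n)}(i)$, so a positive block $M_{[n,m)}$ forces $w$ into every $\sigma_{[0,m)}(j)$, giving uniform recurrence of $w$ in each element of $X_{\boldsymbol{\sigma}}$. Unique ergodicity is where $C$-balancedness of $\mathcal{L}_{\boldsymbol{\sigma}}^{(n_k+\ell_k)}$ enters in an essential way: uniformly bounded letter discrepancies propagate to uniformly bounded factor discrepancies, so every factor has a well-defined frequency independent of the starting word, and unique ergodicity follows from the classical Oxtoby--Boshernitzan criterion.

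For conclusion (2), the containment $\mathcal{R} \subset [-C,C]^2 \cap \mathbf{1}^\perp$ (\cite[Lemma~4.1]{Berthe-Steiner-Thuswaldner}) immediately gives compactness of $\mathcal{R}$ and of each closed subtile $\mathcal{R}(i)$. The heart of the argument is a set equation decomposing $\mathcal{R}$ into images, under the projected action of $M_{[0,k)}$, of the Rauzy fractal associated with the shifted sequence $(\sigma_{n+k})_{n\ge 0}$; iterating along the recurrence times $n_k$ and using the stationary positive block $B$ yields a tower that is self-similar in the limit. Non-empty interior of each subtile is then obtained by showing that at every level of this tower each subtile occupies a definite fraction of $\mathcal{R}$, bounded below by a constant depending only on $B$. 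The identities $\mathcal{R}(i) = \overline{\mathrm{int}\,\mathcal{R}(i)}$ and $\lambda(\partial \mathcal{R}(i)) = 0$ then follow from a Perron--Frobenius-type argument on boundary measures along the tower: boundary measures satisfy a contractive recursion while total measures remain bounded below, forcing them to vanish.

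For conclusion (3), the strong coincidence condition, combined with the set equations of (2), forces $\mathcal{R}(1) \cap \mathcal{R}(2)$ to be contained in the image under $M_{[0,n)}$ of the intersection of a single shifted subtile with a lattice translate of itself; iterating along the recurrence times yields $\lambda(\mathcal{R}(1)\cap\mathcal{R}(2))$ arbitrarily small, hence zero. The exchange $E$ is then unambiguously defined $\lambda$-almost everywhere. The representation map $\varphi$ is continuous and surjective, and measure-disjointness of the subtiles makes it injective on a set of full $\mu$-measure (a point $\mathbf{x}$ lying in the interior of a nested chain of subtiles determines its preimage uniquely). The commutation $\varphi(\Sigma\omega) = E\,\varphi(\omega)$ is immediate from the definitions, yielding the claimed measurable conjugacy. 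The main obstacle is expected to be conclusion (2), specifically establishing $\lambda(\partial\mathcal{R}(i)) = 0$ simultaneously with non-empty interior: in the substitutive setting both follow from a finite graph-directed IFS, whereas here the lack of strict self-similarity forces us to extract an approximate self-similar structure via Property PRICE, and converting this into sharp measure estimates requires a careful interplay of the uniform $C$-balancedness (which controls the metric size of $\mathcal{R}$) with the stationary block $B$ (which supplies the contraction ratio needed for the recursive boundary estimate).
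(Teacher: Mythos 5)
You should first be aware that the paper does not prove this proposition: the sentence immediately preceding it reads ``The following results appear in \cite[Theorem~1]{Berthe-Steiner-Thuswaldner}'', so the statement is imported wholesale from that reference and the paper's ``proof'' is a citation. Your outline is therefore being measured against the proof in \cite{Berthe-Steiner-Thuswaldner}, and at the level of strategy it does match it: Property PRICE, set equations along the recurrence times~$n_k$, the stationary positive block~$B$ as a substitute for self-similarity, and the representation map~$\varphi$ are exactly the ingredients used there.

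However, two steps in your treatment of conclusion~(2) have genuine gaps. First, showing that ``each subtile occupies a definite fraction of $\mathcal{R}$'' gives a lower bound on Lebesgue measure, not non-empty interior; a set of positive measure can have empty interior, so an extra argument is needed (in the cited work this is a Baire-type argument using that $\mathcal{R}$ is covered by finitely many pieces of the set equation together with the relative denseness of the projected prefix points). Second, and more seriously, your ``contractive recursion on boundary measures'' implicitly assumes that the union in the set equation is measure-disjoint --- otherwise the measures of the pieces need not add up and no closed recursion exists. Establishing that measure-disjointness is the actual crux of the proposition: it is where unimodularity enters, via a counting argument comparing the number of pieces of the subdivision of $\mathcal{R}$ at level~$n$ with the contraction ratio of $M_{[0,n)}$ on~$\mathbf{1}^\perp$ (their product is controlled precisely because $|\det M_{[0,n)}|=1$), and it is logically intertwined with $\lambda(\partial\mathcal{R}(i))=0$ rather than a consequence of a boundary estimate. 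Your sketch does not say how this circularity is broken, and the same omission propagates into conclusion~(3), where the deduction of $\lambda(\mathcal{R}(1)\cap\mathcal{R}(2))=0$ from strong coincidence again rests on the pieces of the level-$n$ subdivision being measure-disjoint, not merely on the intersection being contained in shrinking images. (A minor point: unique ergodicity in this setting is usually derived from the nesting of the cones $M_{[0,n)}\mathbb{R}_+^2$ around the generalized right eigenvector, with balancedness as a quantitative reinforcement, rather than from a Boshernitzan-type criterion; your conclusion is right but the attribution is off.)
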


We will consider in the sequel the one-dimensional lattice $\Lambda := \mathbf{1}^\perp \cap \mathbb{Z}^2 = \mathbb{Z}(\mathbf{e}_2 - \mathbf{e}_1)$. 
Let $\pi:\, \mathbf{1}^\perp \to \mathbf{1}^\perp/\Lambda$ be the canonical projection. 
Since $\pi_{\mathbf{u},\mathbf{1}}\,\mathbf{e}_2 \equiv \pi_{\mathbf{u},\mathbf{1}}\,\mathbf{e}_1 \bmod \Lambda$ holds, the canonical projection of~$E$ onto $\mathbf{1}^\perp/\Lambda \cong \mathbb{S}^1$
  is equal to the translation $\mathbf{x}\mapsto\mathbf{x} + \pi_{\mathbf{u},\mathbf{1}}\,\mathbf{e}_1$.
Then we have the following commutative diagram:
\begin{equation}\label{diag}
\begin{gathered}
\xymatrix{X_{\boldsymbol{\sigma}} \ar[r]^\varphi\ar[d]_\Sigma & \mathcal{R} \ar[r]^\pi\ar[d]_E & \mathbf{1}^\perp/\Lambda\ar[d]_{+\pi_{\mathbf{u},\mathbf{1}}\mathbf{e}_1} \\ X_{\boldsymbol{\sigma}} \ar[r]_\varphi & \mathcal{R}\ar[r]_\pi  &\mathbf{1}^\perp/\Lambda } 
\end{gathered}
\end{equation}

\section{Strong coincidence}\label{sec:strongcoincidence}
We recall the formalism of \emph{geometric substitution} introduced in \cite{Arnoux-Ito:01}. 
For $[\mathbf{x},i] \in \mathbb{Z}^2\times\mathcal{A}$, let
\[  
E_1(\sigma)[\mathbf{x},i] = \big\{ [M_\sigma \mathbf{x}+\mathbf{l}(p),j]:\, j\in\mathcal{A}, \, p\in\mathcal{A}^* \text{ and } pj \text{ is a prefix of }\sigma(i) \big\}. 
\]
Then strong coincidence holds (on two letters) if and only if there exists $n \in \mathbb{N}$ such that $E_1(\sigma_{[0,n)})[\mathbf{0},1] \cap  E_1(\sigma_{[0,n)})[\mathbf{0},2] \neq \emptyset$.

We identify each $[\mathbf{x},i] \in\mathbb{Z}^2\times\mathcal{A}$ with the segment $\mathbf{x} + [0,1)\, \mathbf{e}_i$. 
Define the \emph{height} (with respect to~$\mathbf{u}$ and~$\mathbf{v}$) of a point $\mathbf{x} = t \mathbf{u} + \pi_{\mathbf{u},\mathbf{v}}\, \mathbf{x} \in\mathbb{R}^2$ by $H(\mathbf{x}) := t \in \mathbb{R}$.

According to the terminology introduced in \cite{Barge-Diamond:02}, a~\emph{configuration} (of segments) $\mathcal{K}$ of size~$m$ with respect to a vector~$\mathbf{w}$ is a collection of $m$ distinct segments $[\mathbf{x},i] \in \mathbb{Z}^2 \times \mathcal{A}$ such that some translate of~$\mathbf{w}^\perp$ intersects the interior of each element of~$\mathcal{K}$ (the corresponding points thus have the same height with respect to  $\mathbf{u}$ and~$\mathbf{w}$).
The $n$-th iterate is 
\[
\mathcal{K}^{(n)} = \big\{ E_1(\sigma_{[0,n)})[\mathbf{x},i]:\, [\mathbf{x},i] \in \mathcal{K} \big\}.
\]
Note  that  he $n$-th iterate of a configuration is not a configuration  of segments but a union of ``broken lines''.

Observe that, by \cite[Proposition~4.3 and Lemma~4.1]{Berthe-Steiner-Thuswaldner} for a primitive, algebraically irreducible, and recurrent sequence of substitutions~$\boldsymbol{\sigma}$ with $C$-balanced language~$\mathcal{L}_{\boldsymbol{\sigma}}$, we have $\lim_{n\to\infty} \pi_{\mathbf{u},\mathbf{1}}\, M_{[0,n)} \mathbf{x} = \mathbf{0}$ for each $\mathbf{x} \in \mathbb{R}^2$ and $\| \pi_{\mathbf{u},\mathbf{1}}\,\mathbf{l}(p)\|\le C$ for all prefixes~$p$ of limit words of~$\boldsymbol{\sigma}$. 
Thus, for each sufficiently large~$n$, the vertices of~$\mathcal{K}^{(n)}$ are in 
\[
T_{\mathbf{u},C} := \{\mathbf{x} \in \mathbb{Z}^2:\, \|\pi_{\mathbf{u},\mathbf{1}}\, \mathbf{x}\| < C+1\},
\]
and we may consider only $\mathcal{K} \subset T_{\mathbf{u},C}$. (This corresponds to    \cite[Lemma 2]{Barge-Diamond:02} which is  stated in the substitutive case.)
In particular,  $ \{[\mathbf{0},1], [\mathbf{0},2] \}$  is a configuration as soon  as $\mathbf {w}$ has positive entries. Obviously, thus configuration it is contained in  $ T_{\mathbf{u},C}$.

We say that $\mathcal{K}$ has an $n$-\emph{coincidence} if there exist
$[\mathbf{x},i], [\mathbf{y},j] \in \mathcal{K}$ such that $E_1(\sigma_{[0,n)})[\mathbf{x},i]\cap E_1(\sigma_{[0,n)})[\mathbf{y},j]\neq\emptyset$.
Given a set $J \subseteq \mathbb{N}$, we say that a configuration~$\mathcal{K}$ is $J$-\emph{coincident} if $\mathcal{K}$ has an $n$-coincidence for some $n\in J$.
Observe that $n$-coincidence does not necessarily imply $m$-coincidence for $m > n$. 
However, translating all vertices of a configuration by a fixed vector does not change the property of being $J$-coincident. 

We first prove the following proposition, generalizing the proof of~\cite[Theorem 1]{Barge-Diamond:02}. 

\begin{proposition} \label{p:1}
Assume that the sequence of substitutions $\boldsymbol{\sigma} = (\sigma_n)_{n\in\mathbb{N}}$ over the alphabet $\mathcal{A} = \{1,2\}$ has Property PRICE w.r.t.\ the sequences $(n_k)_{k\in\mathbb{N}}$ and $(\ell_k)_{k\in\mathbb{N}}$ and the vector~$\mathbf{v}$, and that 
\begin{equation}
\mathbf{v}^\perp \cap \mathbb{Z}^2 \cap (T_{\mathbf{u},C} - T_{\mathbf{u},C}) = \{\mathbf{0}\}, \label{e:vrat}
\end{equation}
with $C$ such that $\mathcal{L}_{\boldsymbol{\sigma}}$ is $C$-balanced.
Then $\boldsymbol{\sigma}$ satisfies the strong coincidence condition.
\end{proposition}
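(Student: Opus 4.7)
I argue by contradiction following the pair-graph strategy of Barge and Diamond, with Property PRICE substituting for the iterative structure available in the substitutive setting. Assume that $\mathcal{K}_0 := \{[\mathbf{0},1],[\mathbf{0},2]\}$ admits no $n$-coincidence, so that the two broken lines $B_n^{(i)} := E_1(\sigma_{[0,n)})[\mathbf{0},i]$ remain disjoint for every $n$. The observation preceding the proposition confines the vertices of $B_n^{(1)} \cup B_n^{(2)}$ to the strip $T_{\mathbf{u},C}$ once $n$ is large enough, so any size-$2$ configuration extracted from these iterates has difference vector in $T_{\mathbf{u},C} - T_{\mathbf{u},C}$. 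Hypothesis~\eqref{e:vrat} separates distinct lattice points of this difference set by their $\mathbf{v}$-heights, and combined with~(E) this yields only finitely many configuration types (up to lattice translation) uniformly in~$k$.

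The next step is to produce a recurring configuration. Using~(R), one has the factorisation $\mathcal{K}_0^{(n_k+\ell_k)} = E_1(\sigma_{[0,n_k)})\,\mathcal{K}_0^{(\ell_k)}$, and the two broken lines inside $\mathcal{K}_0^{(\ell_k)}$ span a $\mathbf{v}^{(n_k)}$-height interval of length comparable to $\|\mathbf{v}^{(n_k)}\|$. For each $k$ large enough, one can therefore pick a size-$2$ $\mathbf{v}^{(n_k)}$-configuration $\mathcal{K}_k$ consisting of one segment from each broken line. Pigeonhole on the finite type-set yields a subsequence along which the $\mathcal{K}_k$ are all lattice translates of a single fixed configuration $\mathcal{K}^* = \{[\mathbf{x}_1,i_1],[\mathbf{x}_2,i_2]\}$ with $i_1 \neq i_2$. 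Translation invariance of $J$-coincidence combined with the standing assumption shows that $\mathcal{K}^*$ admits no $n_k$-coincidence for any $k$ in the subsequence.

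The contradiction comes from Property~(P). After a diagonal extraction one may assume that $\sigma_{[\ell_k-h,\ell_k)}$ equals a fixed word $\beta$ with positive incidence matrix~$B$, and through~(R) that such a block appears at the tail of $\sigma_{[0,n_k+\ell_k)}$ for all $k$ in the subsequence. Following~$\mathcal{K}^*$ through the iteration, the positivity of $B$ forces the images $\beta(i_1)$ and $\beta(i_2)$ to contain prefixes ending in the same letter whose abelianised difference, pinned by~\eqref{e:vrat}, must match $B(\mathbf{x}_1-\mathbf{x}_2)$; this yields a coincidence inside $E_1(\beta)\mathcal{K}^*$, which propagates through the remaining $E_1$-actions to a genuine $(n_k+\ell_k)$-coincidence of $\mathcal{K}_0$, contradicting the standing assumption.

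The main obstacle I anticipate is this concluding coincidence step. In the substitutive setting one can iterate a single substitution indefinitely and invoke the Pisot eigenvalue to force a collision, whereas here one has only the fixed positive factor $B$ supplied by~(P) and the recurrence structure of~(R). Showing that these, together with the rigidity coming from~\eqref{e:vrat} and the convergence~(E), are strong enough to exhibit a concrete coincidence inside $E_1(\beta)\mathcal{K}^*$ is precisely the point where the $S$-adic framework departs from the classical pair-graph argument and where the bookkeeping of lattice translation classes must be carried out most carefully.
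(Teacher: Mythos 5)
There is a genuine gap, and you have in fact located it yourself: the ``concluding coincidence step'' you flag as the main obstacle is not an obstacle to be overcome by bookkeeping --- it is the heart of the proof, and the mechanism you propose for it does not work. Positivity of the recurring block $B$ from~\ref{defP} does not force a coincidence inside $E_1(\beta)\mathcal{K}^*$: for instance $\beta\colon 1\mapsto 12,\ 2\mapsto 21$ has positive incidence matrix, yet $E_1(\beta)[\mathbf{0},1]=\{[\mathbf{0},1],[\mathbf{e}_1,2]\}$ and $E_1(\beta)[\mathbf{0},2]=\{[\mathbf{0},2],[\mathbf{e}_2,1]\}$ are disjoint. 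Nothing in~\eqref{e:vrat} or~\ref{defE} ``pins'' the abelianized prefixes the way you suggest, so the contradiction never materializes. A second structural problem is your restriction to size-$2$ configurations: with only two segments there is no leverage to produce a coincidence at all, because a pair of disjoint broken lines can perfectly well stay disjoint forever unless some external rigidity intervenes.

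The paper's argument supplies that rigidity differently. It considers the family $\mathcal{D}$ of configurations in $T_{\mathbf{u},C}$ that are \emph{not} $J_h$-coincident (for a carefully built set $J_h$ of iteration times assembled from~\ref{defR} so that coincidences compose), picks a configuration $\mathcal{K}\in\mathcal{D}$ of \emph{maximal size} --- possibly much larger than $2$ --- and slices the iterate $\mathcal{K}^{(n_k)}$ by the heights of its vertices into configurations $\mathcal{K}^{(n_k)}_j$, all of the same size and all in $\mathcal{D}$. Maximality is what converts ``no coincidence'' into determinism: at each vertex $\mathbf{p}_j$ where only one segment ends, at most one of the two possible outgoing segments can keep the configuration in $\mathcal{D}$, for otherwise adding both would produce a configuration in $\mathcal{D}$ of larger size unless $[\mathbf{p}_j,1]$ and $[\mathbf{p}_j,2]$ coincide under some $E_1(\sigma_{[0,n)})$ --- which is exactly the strong coincidence. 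If strong coincidence fails, the slice sequence is therefore eventually periodic with some translation vector $\mathbf{t}_k$, and letting $k\to\infty$ produces arbitrarily large multiples of a fixed $\mathbf{t}\in\mathbb{Z}^2$ staying inside the bounded strip $T_{\mathbf{u},C}$; hence $\mathbf{t}$ is parallel to $\mathbf{u}$, contradicting the rational independence of the coordinates of $\mathbf{u}$ (Lemma~\ref{le:rational}). Your pigeonhole extraction of a recurring translate $\mathcal{K}^*$ is a reasonable first move, but without the maximal-size device and the periodicity-versus-irrationality endgame, the proof cannot be completed along the lines you sketch.
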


Note that (\ref{e:vrat}) holds in particular when~$\mathbf{v}$ has rationally independent coordinates.

\begin{proof}
Let $(n_k)_{k\in\mathbb{N}}$, $(\ell_k)_{k\in\mathbb{N}}$ be the sequences of property PRICE.
Consider the sets 
\[ 
J_h = \big\{ n_{k_0} + n_{k_1} + \cdots + n_{k_s} : s \ge 0,\, n_{k_j} + \ell_{k_j} \leq \ell_{k_{j+1}} \ \forall\, 0 \le j < s,\, k_0 \ge h\big\}.  
\] 
Note that $k_{j } < k_{j+1}$, for $ 0 \le j < s$, since  $n_{k_j} + \ell_{k_j} \leq \ell_{k_{j+1}}$ implies in particular that  $\ell_{k_j} < \ell_{k_{j+1}}$.
Given such a sum in~$J_h$, repeatedly applying~\ref{defR} we get
\begin{equation}\label{rec}
\sigma_{[0,n_{k_0} + n_{k_1} + \cdots + n_{k_s})} = \sigma_{[0,n_{k_s})}\cdots \sigma_{[0,n_{k_1})} \sigma_{[0,n_{k_0})}.
\end{equation}  

We only consider in this proof  configurations with respect to~$\mathbf{v}$. Let $\mathcal{D}_h$ be the set of \emph{not} $J_h$-coincident configurations 
that are contained in~$T_{\mathbf{u},C}$, and $\mathcal{D} = \bigcup_{h\in\mathbb{N}} \mathcal{D}_h$. 
Since $J_0 \supset J_1 \supset \cdots$, we have $\mathcal{D}_0 \subseteq \mathcal{D}_1\subseteq \cdots$.
As $\mathbf{u} \in \mathbb{R}_+^2$ and $\mathbf{v} \in \mathbb{R}_{\ge0}^2 \setminus \{\mathbf{0}\}$ are not orthogonal (by Lemma~\ref{le:rational}), $\mathcal{D}$ contains only finitely many configurations up to translation.
Moreover, with each configuration $\mathcal{K}  \in \mathcal{D}_h$, all translates of~$\mathcal{K}$ that are in~$T_{\mathbf{u},C}$ are also contained in~$\mathcal{D}_h$, by the translation-invariance of $J_h$-coincidence. 
Therefore, we have $\mathcal{D}_h = \mathcal{D}$ for all sufficiently large~$h$.

Let $\mathcal{K}$ be a configuration in~$\mathcal{D}$ of maximal size.
There exists an interval~$I$ of positive length such that, for every $t \in I$, $\mathbf{v}^\perp + t \mathbf{u}$ intersects each of the segments of~$\mathcal{K}$ in its interior.  Indeed,    \eqref{e:vrat} implies that   $\mathbf{v} $ has  positive coordinates.
By property~\ref{defE}, the same holds for $(\mathbf{v}^{(n_k)})^\perp + t \mathbf{u}$, provided that $k$ is sufficiently large.

Consider now $\mathcal{K}^{(n_k)}$, with $k$ large enough such that all the following hold: $\mathcal{D}_k = \mathcal{D}$, all segments of~$\mathcal{K}$ intersect $(\mathbf{v}^{(n_k)})^\perp + t \mathbf{u}$ for all $t \in I$, and all vertices of $\mathcal{K}^{(n_k)}$ are in~$T_{\mathbf{u},C}$. 
Let $\{\mathbf{p}_1, \mathbf{p}_2, \ldots, \mathbf{p}_{r_k}\}$ be the set of vertices of~$\mathcal{K}^{(n_k)}$ that are contained in $M_{[0,n_k)} \big((\mathbf{v}^{(n_k)})^\perp + I\, \mathbf{u}\big) =  \mathbf{v}^\perp + I\, M_{[0,n_k)} \mathbf{u}$. 
By~\eqref{e:vrat}, no two points in $\mathbb{Z}^2 \cap T_{\mathbf{u},C}$ have the same height. Therefore, we can assume w.l.o.g.\ that $H(\mathbf{p}_1) < H(\mathbf{p}_2) < \cdots < H(\mathbf{p}_{r_k})$. 

 For all $1 \le j \le r_k$, the configuration 
\[ 
\mathcal{K}^{(n_k)}_j := \{ [\mathbf{x},i] \in\mathcal{K}^{(n_k)}:\, \big(\mathbf{x} + [0,1)\, \mathbf{e}_i\big) \cap \big(\mathbf{v}^\perp + H(\mathbf{p}_j)\, \mathbf{u}\big) \neq \emptyset \}  
\]
has the same size as~$\mathcal{K}$ because $\mathcal{K}$ is not $J_k$-coincident and, for each $[\mathbf{x}',i'] \in \mathcal{K}$, the collection of segments in $E_1(\sigma_{[0,n_k)})[\mathbf{x}',i']$ forms a broken line from $M_{[0,n_k)} \mathbf{x}'$ to $M_{[0,n_k)} (\mathbf{x}'+\mathbf{e}_{i'})$ that intersects, for each $t \in I$, the line $\mathbf{v}^\perp + t\, \mathbf{u}$ exactly once. (Observe that the stripe $\mathbf{v}^\perp + I\, M_{[0,n_k)} \mathbf{u}$ has two complementary components, each of which contains one of the endpoints $M_{[0,n_k)} \mathbf{x}'$ and $M_{[0,n_k)} (\mathbf{x}'+\mathbf{e}_{i'})$ because $\mathbf{x}'$ and $\mathbf{x}'+\mathbf{e}_{i'}$ lie in different complementary components of the stripe $(\mathbf{v}^{(n_k)})^\perp + I \mathbf{u}$.)

Moreover, we have $\mathcal{K}^{(n_k)}_j \in \mathcal{D}$.
Indeed, take $h \ge k$ such that $\ell_h \ge n_k + \ell_k$.
If $\mathcal{K}^{(n_k)}_j$ were not in $\mathcal{D} = \mathcal{D}_h$, then $\mathcal{K}^{(n_k)}_j$ would have an $m$-coincidence for some $m \in J_h$.
Write $m= n_{k_0} + n_{k_1} + \cdots + n_{k_s}$.  One has $m+n_k \in J_k$, since $ n_k  +\ell_k \leq  \ell_h \leq  \ell_{k_0} $.
But then $\mathcal{K}$ would have an ($m+n_k$)-coincidence because 
\[ 
E_1(\sigma_{[0,m)}) \mathcal{K}^{(n_k)} = E_1(\sigma_{[0,m)}) E_1(\sigma_{[0,n_k)}) \mathcal{K} = E_1(\sigma_{[0,m+n_k)})\mathcal{K},
\] 
contradicting that $\mathcal{K} \in \mathcal{D} = \mathcal{D}_k$. 

For all $2 \le j \le r_k$, the configurations $\mathcal{K}^{(n_k)}_{j-1}$ and $\mathcal{K}^{(n_k)}_j$ differ only by segments ending and beginning at~$\mathbf{p}_j$, and the number of segments in $\mathcal{K}^{(n_k)}$ ending and beginning at~$\mathbf{p}_j$ is thus the same. 
Let $\mathcal{K}'$ be equal to $\mathcal{K}^{(n_k)}_{j-1}$, with the segments ending at~$\mathbf{p}_j$ removed.
We have the following possibilities.
\begin{enumerate}
\item\label{caseA}
Two segments of~$\mathcal{K}^{(n_k)}_{j-1}$ end at~$\mathbf{p}_j$. 
Then $\mathcal{K}^{(n_k)}_j = \mathcal{K}' \cup \{[\mathbf{p}_j,1], [\mathbf{p}_j,2]\}$. 
\item\label{caseB}
One segment of~$\mathcal{K}^{(n_k)}_{j-1}$ ends at~$\mathbf{p}_j$, and either $\mathcal{K}' \cup \{[\mathbf{p}_j,1]\}$ or $\mathcal{K}' \cup \{[\mathbf{p}_j,2]\}$ is in~$\mathcal{D}$.
Then $\mathcal{K}^{(n_k)}_j = \mathcal{K}' \cup \{[\mathbf{p}_j,i]\}$ with $i$ such that $\mathcal{K}' \cup \{[\mathbf{p}_j,i]\} \in \mathcal{D}$.
\item
One segment of~$\mathcal{K}^{(n_k)}_{j-1}$ ends at~$\mathbf{p}_j$, and both $\mathcal{K}' \cup \{[\mathbf{p}_j,1]\}$, $\mathcal{K}' \cup \{[\mathbf{p}_j,2]\}$ are in~$\mathcal{D}$. 
Since the size of~$\mathcal{K}$ is maximal in~$\mathcal{D}$, we have $\mathcal{K}' \cup \{[\mathbf{p}_j,1], [\mathbf{p}_j,2]\} \notin \mathcal{D}$, hence $E_1(\sigma_{[0,n)}) [\mathbf{p}_j,1] \cap E_1(\sigma_{[0,n)}) [\mathbf{p}_j,2] \neq \emptyset$ for some $n \in \mathbb{N}$. 
Then also $E_1(\sigma_{[0,n)}) [\mathbf{0},1] \cap E_1(\sigma_{[0,n)}) [\mathbf{0},2] \neq \emptyset$, thus the strong coincidence condition holds.
\end{enumerate}

Assume now that the strong coincidence condition does not hold. 
Hence we are always either in case \eqref{caseA} or case~\eqref{caseB}.
Then $\mathcal{D}$ and the relative positions of the segments within~$\mathcal{K}^{(n_k)}_{j-1}$ entirely determine~$\mathcal{K}^{(n_k)}_j$.
(Note that $\mathbf{p}_j$ is the endpoint of the segments in $\mathcal{K}^{(n_k)}_{j-1}$, disregarding~$\mathbf{p}_{j-1}$, with minimal height.)

Recall that $\mathcal{D}$ contains up to translation only finitely many configurations, and denote the number of such configurations by~$c$.
Then we have $\mathcal{K}^{(n_k)}_{a+b} = \mathcal{K}^{(n_k)}_a + \mathbf{t}_k$ for some $1 \le a,b \le c$, and some translation vector~$\mathbf{t}_k = \mathbf{p}_{b+a} - \mathbf{p}_a \in\mathbb{Z}^2$ (provided that $r_k \ge 2c$). 
Consequently, we have $\mathcal{K}^{(n_k)}_{j+b} = \mathcal{K}^{(n_k)}_j + \mathbf{t}_k$ for all $a \le j \le r_k-b$, and thus $\mathcal{K}^{(n_k)}_{a+\ell b} = \mathcal{K}^{(n_k)}_a + \ell \,\mathbf{t}_k$, for all $\ell$ such that $a + \ell b \leq r_k$.

Let now $k \to \infty$. 
Then the stripe $\mathbf{v}^\perp + I\, M_{[0,n_k)} \mathbf{u}$ becomes wider and wider as~$k$ grows, hence $r_k \to \infty$.
Since there are only finitely many possibilites for~$\mathbf{t}_k$, there exists thus a $\mathbf{t} \in \mathbb{Z}^2$ such that arbitrarily large multiples of~$\mathbf{t}$ are translation vectors of configurations in~$\mathcal{D}$. 
Since all configurations in~$\mathcal{D}$ are in~$T_{\mathbf{u},C}$, the vector~$\mathbf{t}$ must be a scalar multiple of~$\mathbf{u}$, in contradiction with Lemma~\ref{le:rational}.
This proves that the strong coincidence condition holds.
\end{proof}

To prove Theorem~\ref{scc}, we show that the conditions of Proposition~\ref{p:1} are fulfilled for some shifted sequence $(\sigma_{n+h})_{n\in\mathbb{N}}$. 

\begin{proof}[Proof of Theorem~\ref{scc}]
Let $\boldsymbol{\sigma}$ satisfy the assumptions of Theorem~\ref{scc}.
By \cite[Lemma~5.9]{Berthe-Steiner-Thuswaldner}, property PRICE holds for some sequences $(n_k)$, $(\ell_k)$, a vector~$\mathbf{v}\in\mathbb{R}^2_{\ge 0}$ and a balancedness constant~$C$. 
If $\mathbf{v}$ has rationally independent coordinates, then we can apply Proposition~\ref{p:1} directly. To cover the contrary case, assume in the following that $\mathbf{v}$ is a multiple of a rational vector. 

Consider $\mathbf{x} \in \mathbb{Z}^2 \setminus \{\mathbf{0}\}$.
We have
\begin{align*}
\langle \mathbf{x},\mathbf{v}^{(h)} \rangle & = \langle \mathbf{x}, \tr{(M_{[0,h)})} \mathbf{v} \rangle = \langle M_{[0,h)} \mathbf{x}, \mathbf{v} \rangle = \langle H(M_{[0,h)} \mathbf{x}) \mathbf{u} + \pi_{\mathbf{u},\mathbf{v}}\, M_{[0,h)} \mathbf{x}, \mathbf{v} \rangle \\
& = H(M_{[0,h)} \mathbf{x})\, \langle \mathbf{u}, \mathbf{v} \rangle.
\end{align*}
As $\mathbf{u}$ has rationally independent coordinates by Lemma~\ref{le:rational} and $\mathbf{v}$ is a multiple of a rational vector, we have $\langle \mathbf{u}, \mathbf{v} \rangle \neq 0$.
We have $\lim_{h\to\infty} \pi_{\mathbf{u},\mathbf{v}}\, M_{[0,h)} \mathbf{x} = \mathbf{0}$ by \cite[Proposition~4.3]{Berthe-Steiner-Thuswaldner} and $M_{[0,h)} \mathbf{x} \in \mathbb{Z}^2 \setminus \{\mathbf{0}\}$ since $\boldsymbol{\sigma}$ is algebraically irreducible, thus $H(M_{[0,h)} \mathbf{x}) \ne 0$ for all sufficiently large~$h$.
We conclude that, for each $\mathbf{x} \in \mathbb{Z}^2 \setminus \{\mathbf{0}\}$, there is $h_0(\mathbf{x})$ such that 
\begin{equation}\label{eq:hx}
\mathbf{x} \not\in (\mathbf{v}^{(h)})^\perp \hbox{ for all } h \ge h_0(\mathbf{x}).
\end{equation}
As $\mathbf{u} \in \mathbb{R}_+^2$, the set $\bigcup_{h\in \mathbb{N}} (\mathbf{v}^{(h)})^\perp \cap (T_{\mathbf{u},C} - T_{\mathbf{u},C})$ is bounded and, hence, its intersection with $\mathbb{Z}^2$ is finite. Thus \eqref{eq:hx} implies that $(\mathbf{v}^{(h)})^\perp \cap (T_{\mathbf{u},C} - T_{\mathbf{u},C}) \cap \mathbb{Z}^2 = \{\mathbf{0}\}$ for all sufficiently large~$h$. 

Choose now $h = n_k + \ell_k$ sufficiently large.
Then the language $\mathcal{L}_{\boldsymbol{\sigma}}^{(h)}$ is $C$-balanced by Property~\ref{defC}.
By \cite[Lemma~5.10]{Berthe-Steiner-Thuswaldner}, there exists $k_0$ such that the shifted sequence $(\sigma_{n+h})_{n\in\mathbb{N}}$ has property PRICE w.r.t.~$(n_{k+k_0})$ and $(\ell_{k+k_0}-h)$ and the vector $\mathbf{v}^{(h)}$.
Thus $(\sigma_{n+h})_{n\in\mathbb{N}}$ satisfies the strong coincidence condition by Proposition~\ref{p:1}, i.e., there exists $n$ such that $E_1(\sigma_{[h,n+h)}) [\mathbf{0},1] \cap E_1(\sigma_{[h,n+h)}) [\mathbf{0},2] \neq \emptyset$. 
This implies that $E_1(\sigma_{[0,n+h)}) [\mathbf{0},1] \cap E_1(\sigma_{[0,n+h)}) [\mathbf{0},2] \neq \emptyset$, which concludes the proof of the theorem. 
\end{proof}

\section{Proof of the $S$-adic Pisot conjecture}\label{sec:conjecture}
In this section we will deduce Theorem~\ref{sadic2} from Theorem~\ref{scc} and the following lemma, which is a generalization of a result of \cite{Host}; see also \cite[Section~6.3.3]{Queffelec:10}. 
Recall Section~\ref{dynp}, in particular the diagram~\eqref{diag}.
Theorem~\ref{th:ae} follows immediately from Theorem~\ref{sadic2}.

\begin{lemma}\label{host}
Let $\boldsymbol{\sigma}$ be as in Theorem~\ref{sadic2}. 
Then the map $\overline{\varphi} = \pi \circ \varphi$ is one-to-one $\mu$-almost everywhere on $X_{\boldsymbol{\sigma}}$.
\end{lemma}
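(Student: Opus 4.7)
The proof adapts Host's strategy for substitutive systems to the present $S$-adic framework, making use of the commutative diagram~\eqref{diag}.

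\emph{Reduction.} By Theorem~\ref{scc}, $\boldsymbol{\sigma}$ satisfies the strong coincidence condition, so Proposition~\ref{sadicres}\eqref{1to1} provides that $\varphi:(X_{\boldsymbol{\sigma}},\Sigma,\mu)\to(\mathcal{R},E,\lambda)$ is a measurable conjugacy (for a suitably normalized $\lambda$). Hence it suffices to show that $\pi|_{\mathcal{R}}$ is $\lambda$-almost everywhere one-to-one, which is equivalent to the statement that the $\Lambda$-translates of $\mathcal{R}$ are pairwise disjoint up to $\lambda$-null sets.

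\emph{Constancy of the multiplicity.} Introduce the multiplicity function $N(\bar{\mathbf{x}})=\#\bigl((\pi|_{\mathcal{R}})^{-1}(\bar{\mathbf{x}})\bigr)$. By $C$-balancedness and \cite[Lemma~4.1]{Berthe-Steiner-Thuswaldner}, $\mathcal{R}\subset [-C,C]^2\cap\mathbf{1}^\perp$ is bounded, so $N$ is uniformly bounded. By Lemma~\ref{le:rational}, the coordinates of $\mathbf{u}$ are rationally independent, so the translation $R:\bar{\mathbf{x}}\mapsto\bar{\mathbf{x}}+\pi_{\mathbf{u},\mathbf{1}}\mathbf{e}_1$ is an irrational rotation on $\mathbb{S}^1$, hence uniquely ergodic for the Haar measure $\lambda_{\mathbb{S}^1}$. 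Using $\pi\circ E=R\circ\pi$ from~\eqref{diag}, together with the a.e.\ bijectivity of $E$ on $(\mathcal{R},\lambda)$ inherited from the conjugacy with $\Sigma$, one verifies that $N\circ R=N$ almost everywhere. Ergodicity of $R$ then forces $N\equiv m$ a.e.\ for some positive integer~$m$.

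\emph{The main obstacle: $m=1$.} By minimality of $R$ we have $\pi(\mathcal{R})=\mathbf{1}^\perp/\Lambda$, so integrating the constant $m$ yields $\lambda(\mathcal{R})=m\,\|\mathbf{e}_2-\mathbf{e}_1\|=m\sqrt{2}$, and we are reduced to the measure identity $\lambda(\mathcal{R})=\sqrt{2}$, i.e.\ that $\mathcal{R}$ is (modulo null sets) a fundamental domain of $\Lambda$ in $\mathbf{1}^\perp$. I would attempt to establish this by exploiting the two a.e.-disjoint decompositions $\mathcal{R}=\mathcal{R}(1)\cup\mathcal{R}(2)$ (from Proposition~\ref{sadicres}\eqref{1to1}) and $\mathcal{R}=E(\mathcal{R})=(\mathcal{R}(1)+\pi_{\mathbf{u},\mathbf{1}}\mathbf{e}_1)\cup(\mathcal{R}(2)+\pi_{\mathbf{u},\mathbf{1}}\mathbf{e}_2)$, coupled with the letter-frequency relations $\lambda(\mathcal{R}(i))/\lambda(\mathcal{R})=u_i/(u_1+u_2)$ furnished by the conjugacy between $\mu$ and $\lambda$. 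An alternative, contradiction-based route modelled on Host's original argument would assume $m\ge 2$: the covering $\pi|_{\mathcal{R}}$ would then exhibit $(X_{\boldsymbol{\sigma}},\Sigma,\mu)$ as a nontrivial finite group extension of $(\mathbb{S}^1,R,\lambda_{\mathbb{S}^1})$, producing measurable eigenvalues outside $\mathbb{Z}\,\pi_{\mathbf{u},\mathbf{1}}\mathbf{e}_1\bmod\Lambda$; the recurrence property~\ref{defR} together with the strong coincidence condition, propagated along the directive sequence, should then yield the required contradiction. This last step is where the substitutive proof uses stationarity, so its $S$-adic analogue is where the recurrence hypothesis of Theorem~\ref{sadic2} must do the essential work.
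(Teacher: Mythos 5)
Your reduction to the constancy of the multiplicity function $N$ is fine as far as it goes (boundedness of $N$ from $C$-balancedness, $R$-invariance via $\pi\circ E=R\circ\pi$, ergodicity of the irrational rotation), but the proof stops exactly where the lemma begins: the step $m=1$ is the entire content of the statement, and neither of your two suggested routes closes it. The first route cannot work as described: the relations $\mathcal{R}=\mathcal{R}(1)\cup\mathcal{R}(2)$, $\mathcal{R}=E(\mathcal{R})$ and $\lambda(\mathcal{R}(i))/\lambda(\mathcal{R})=u_i/(u_1+u_2)$ are all homogeneous in $\lambda(\mathcal{R})$ and are preserved if one replaces $\mathcal{R}$ by a union of several of its $\Lambda$-translates, so they cannot determine $\lambda(\mathcal{R})$ and hence cannot rule out $m\ge 2$. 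Moreover, in the two-letter unimodular setting the assertion ``$\mathcal{R}$ is a fundamental domain of $\Lambda$ up to null sets'' is not an intermediate step but a reformulation of pure discrete spectrum itself, so nothing has been reduced. The second route (a spectral argument on a putative finite group extension) is left entirely unexecuted, and you acknowledge that this is where the work lies.

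The paper's proof supplies precisely the missing ingredient, by working upstairs on $X_{\boldsymbol{\sigma}}$ rather than on the fractal. Given $u,v$ with $\overline{\varphi}(u)=\overline{\varphi}(v)$, it sets $\mathbf{z}_n=\varphi(\Sigma^n u)-\varphi(\Sigma^n v)$ and shows inductively that $\mathbf{z}_n\in\Lambda=\mathbb{Z}(\mathbf{e}_2-\mathbf{e}_1)$ with increments $\mathbf{z}_{n+1}-\mathbf{z}_n\in\{\mathbf{0},\pm(\mathbf{e}_2-\mathbf{e}_1)\}$; writing $z_n=\pi_0(\mathbf{z}_n)\in\mathbb{Z}$, the $C$-balancedness bound $\mathcal{R}\subset[-C,C]^2\cap\mathbf{1}^\perp$ together with minimality of $(X_{\boldsymbol{\sigma}},\Sigma)$ produces relatively dense sets of times $n$ at which $z_n\ge 0$ and at which $z_n\le 0$; since $z_n$ moves by at most $1$ at each step, an intermediate-value argument forces $z_n=0$ for some $n$, i.e.\ $\varphi(\Sigma^n u)=\varphi(\Sigma^n v)$, and the $\mu$-a.e.\ injectivity of $\varphi$ from Proposition~\ref{sadicres}\eqref{1to1} concludes. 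Note how this argument exploits the two-letter hypothesis in an essential way ($\Lambda$ has rank one and the cocycle increments have absolute value at most $1$); any correct completion of your approach would have to encode the same information. If you want to keep your multiplicity-function framework, you should at least prove that two points of $\mathcal{R}$ in the same $\Lambda$-coset have equal $E$-orbits after finitely many steps a.e., which is exactly the paper's claim in disguise.
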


\begin{proof}
Note first that it  is sufficient to show that whenever $\overline{\varphi}(u) = \overline{\varphi}(v)$, one can find a non-negative integer $n$ such that $\varphi(\Sigma^n u) = \varphi(\Sigma^n v)$.
The $\mu$-almost everywhere injectivity of~$\overline{\varphi}$ will thus come from the $\mu$-almost everywhere injectivity of~$\varphi$, which holds according to Proposition~\ref{sadicres}~\eqref{1to1}.

Let now $u,v \in X_{\boldsymbol{\sigma}}$ be  such that $\overline{\varphi}(u) = \overline{\varphi}(v)$. Consider the set 
\[ 
\big\{n \in \mathbb{N}:\, \mathbf{z}_n := \varphi(\Sigma^n u) - \varphi(\Sigma^n v) = 0\big\}. 
\]
By induction, one has $\mathbf{z}_n \in \Lambda = \mathbb{Z}\, (\mathbf{e}_2-\mathbf{e}_1)$ for all~$n$. 
Indeed, $\mathbf{z}_0 \in \Lambda$ because $\overline{\varphi}(u) - \overline{\varphi}(v) = 0$. 
Using $E\circ\varphi = \varphi\circ\Sigma$ we see, for all~$n$, that 
\[
\mathbf{z}_{n+1} - \mathbf{z}_n = \big(E^{n+1}(\varphi(u)) - E^n(\varphi(u))\big) - \big(E^{n+1}(\varphi(v)) - E^n(\varphi(v))\big) \in \big\{\mathbf{0},\pm (\mathbf{e}_2-\mathbf{e}_1)\big\}.
\]

Let $\pi_0 : \mathbf{1}^\perp \to \mathbb{R}$, $(x,-x)\mapsto x$ be the projection on the first coordinate.
Then $z_n := \pi_0(\mathbf{z}_n) \in\mathbb{Z}$ and $z_{n+1}-z_n \in \{0,\pm 1 \}$ for all $n$.
Since $\mathcal{L}_{\boldsymbol{\sigma}}$ is $C$-balanced, we know that $\varphi(X_{\boldsymbol{\sigma}}) = \mathcal{R} \subset [-C,C]^2 \cap \mathbf{1}^\perp$, thus $\pi_0(\varphi(\Sigma^n u)) \in [-C,C]$ for any $u\in X_{\boldsymbol{\sigma}}$.
Minimality of $(X_{\boldsymbol{\sigma}},\Sigma)$, given by Proposition~\ref{sadicres}~\eqref{min}, implies that the sets 
\begin{align*}
A &= \{ n \in \mathbb{N} : \pi_0(\varphi(\Sigma^n u)) > C-1 \}, \\
B &= \{ n \in \mathbb{N} : \pi_0(\varphi(\Sigma^n u)) < -C + 1\}
\end{align*} 
are relatively dense.
If $n\in A$, then $\pi_0(\varphi(\Sigma^n v))\leq C < \pi_0(\varphi(\Sigma^n u)) + 1$ and $z_n\geq 0$. Analogously, we deduce that, if $n\in B$, then $z_n\leq 0$. The result follows observing that, given $p\in A$, $q\in B$ such that $p\leq q$, we can find~$n$ such that $p\leq n \leq q$ and  $z_n = 0$, using the fact that $z_{n+1} - z_n \in \{0,\pm 1\}$ for all $n$.
\end{proof}

\begin{proof}[Proof of Theorem~\ref{sadic2}]
By Theorem~\ref{scc}, the strong coincidence condition holds, which implies that $(X_{\boldsymbol{\sigma}},\Sigma,\mu)$ is measurably conjugate to $(\mathcal{R},E,\lambda)$ via~$\varphi$ by Proposition~\ref{sadicres}~\eqref{1to1}. But by Lemma~\ref{host} we even have that $(X_{\boldsymbol{\sigma}},\Sigma,\mu)$ is measurably conjugate to $(\mathbf{1}^\perp/\Lambda,+\pi_{\mathbf{u},\mathbf{1}}\,\mathbf{e}_1,\lambda)$ via~$\overline{\varphi}$, with $\mathbf{1}^\perp/\Lambda \cong \mathbb{S}^1$.
\end{proof}

\begin{proof}[Proof of Theorem~\ref{th:ae}]
In view of Theorem~\ref{sadic2}, this is an immediate consequence of \cite[Theorem~2]{Berthe-Steiner-Thuswaldner}.
\end{proof}

\bibliographystyle{amsalpha}
\bibliography{sadic2}
\end{document}